\newtheorem{theorem}{Theorem}[section]
\newtheorem{definition}{Definition}[section]
\newtheorem{lemma}[theorem]{Lemma}
\newtheorem{proposition}[theorem]{Proposition}
\newtheorem{conjecture}[theorem]{Conjecture}
\theoremstyle{definition}
\theoremstyle{definition}
\numberwithin{equation}{section}
\def\thanks#1{\protected@xdef\@thanks{\@thanks
        \protect\footnotetext{#1}}}
\renewenvironment{abstract}[1]
{\list{}{\setlength{\leftmargin}{3em}
 \setlength{\rightmargin}{\leftmargin}}\item[]
\textbf{\abstractname.} #1\relax}
{\endlist}
\title{\textbf{Chaos in periodically forced\\reversible vector fields}}
\author{Isabel S. Labouriau and Elisa Sovrano}
\date{}
\begin{document}

\newcommand{\Addresses}{\begin{minipage}[l]{21cm}
\small
\noindent Isabel S. Labouriau\\
Centro de Matem\'atica da Universidade do Porto\\
Rua do Campo Alegre 687, 4169-007 Porto, Portugal\\
email: \href{mailto:islabour@fc.up.pt}{\texttt{islabour@fc.up.pt}}

\medskip

\small
\noindent Elisa Sovrano\\
Istituto Nazionale di Alta Matematica ``Francesco Severi''\\
c/o Dipartimento di Matematica e Geoscienze, 
\\Universit\`a degli Studi di Trieste,\\
Via A. Valerio 12/1, 34127 Trieste, Italy\\
email: \href{mailto:esovrano@units.it}{\texttt{esovrano@units.it}}
\end{minipage}}

\maketitle
\thispagestyle{empty}


\begin{abstract}
\noindent
We discuss the appearance of chaos in  time-periodic perturbations of reversible vector fields in the plane. We use the normal forms of codimension~$1$ reversible vector fields and discuss the ways a time-dependent periodic forcing term of pulse form may be added to them to yield topological chaotic behaviour. Chaos here means that the resulting dynamics is semiconjugate to a shift in a finite alphabet. 
The results rely on the classification of reversible vector fields and on the theory of topological horseshoes. This work is part of a project of studying periodic forcing of symmetric vector fields.

\textbf{Mathematics Subject Classifications:} 34C28, 37G05, 37G40, 54H20.

\textbf{Keywords:} Reversible fields, Symbolic dynamics, Topological horseshoes.
\end{abstract}

\section{Introduction}\label{section-1}
A standard classification of continuous dynamical systems defined by a set of first order ordinary differential equations distinguishes between conservative systems and dissipative ones~\cite{Ru-81}. On the one hand, conservative systems can be described by a Hamiltonian function. By varying the initial conditions, these systems can exhibit regions of regular motions surrounded by a sea of chaotic ones. Instead, dealing with dissipative systems, conserved quantities are no longer guaranteed, and chaotic regions could coexist with stable equilibria, limit cycles, and strange attractors.

In between conservative and dissipative systems, there are systems with reversing symmetries. By reversible dynamical systems we mean those admitting an involution in phase space which reverses the direction of time (see~\cite{De-76,LaRo-98,Se-86,Te-97}). It is shown that these systems despite having similar features to Hamiltonian ones (e.g., at an elliptic equilibrium can possess the same structure), yet they are different because they can also have attractors and repellers. The additional structure given by reversing symmetries allows exhibiting complex behaviors for codimension one bifurcations, and so, it can be responsible for chaotic dynamics. 

The goal of this paper is to find chaos for a class of planar periodically perturbed reversible systems whose normal form analysis is studied in~\cite{Te-97}. We take into account the local bifurcations of low codimension by arguing what dynamical behaviors we can expect. Our main result is the following.

\begin{conjecture}\label{th-intro}
Let $X_{\lambda}(x,y)$ be a fixed type of normal form for a one-parameter family of  codimension 1 reversible vector fields. 
Let $\lambda_1$ and $\lambda_2$ be two real distinct values. Suppose that the dynamical system $\dot{X}=X(x,y)$ switches in a $T$-periodic manner between 
\begin{equation}
\dot{X}=X_{\lambda_1}(x,y) \text{ for $t\in[0,\tau_1)$}\quad \text{and}\quad \dot{X}=X_{\lambda_2}(x,y) \text{ for $t\in[\tau_1,\tau_1+\tau_2)$}
\end{equation}
with $\tau_1+\tau_2=T$. 
Then for open sets of the parameters $(\lambda_1,\lambda_2)$ and for $\tau_1$ and $\tau_2$ in open intervals
 there exist infinitely many $T$-periodic solutions as well as chaotic-like dynamics for the problem $\dot{X}=X(x,y)$.
\end{conjecture}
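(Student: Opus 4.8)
The plan is to recast the statement as a covering property for the stroboscopic map and then feed it into the theory of topological horseshoes announced in the abstract. Let $\Psi^{t}_{\lambda}$ denote the flow of the autonomous system $\dot X = X_{\lambda}(x,y)$ and set
\[
  P \;:=\; \Psi^{\tau_2}_{\lambda_2}\circ\Psi^{\tau_1}_{\lambda_1},
\]
the time-$T$ map of the switched problem, defined on an open set $\Omega\subset\mathbb R^{2}$ on which every orbit in play is defined up to time $T$. By the usual dictionary between a $T$-periodic non-autonomous problem and its Poincar\'e map, fixed points of $P$ correspond to $T$-periodic solutions, $n$-periodic points of $P$ to $nT$-periodic (subharmonic) solutions, and a semiconjugacy of $P|_{\Lambda}$ to a shift on a finite alphabet, for some compact invariant $\Lambda\subset\Omega$, produces exactly the ``chaotic-like'' dynamics claimed. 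So the task reduces to building a topological horseshoe for $P$.

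I would next single out, inside Teixeira's classification, the normal-form type(s) for which the chosen parameter values create a period annulus. The elliptic, Newtonian-type representatives ($\dot x = y$, $\dot y = g(x,\lambda)$ whose potential $V_{\lambda}(x) = -\int_{0}^{x} g(\xi,\lambda)\,d\xi$ has a nondegenerate local minimum) are $R$-reversible with $R(x,y)=(x,-y)$, so each carries a center $c_{\lambda}$ surrounded by a region $A_{\lambda}$ foliated by closed orbits symmetric about $\{y=0\}$. I would pick $\lambda_1,\lambda_2$ (from an open set) so that the two centers $c_1\neq c_2$ sit at distinct points --- they depend continuously and nontrivially on $\lambda$ --- and so that $A_{\lambda_1}\cap A_{\lambda_2}$ contains a lens-shaped region $A$ straddling the symmetry axis, a linked-twist-map configuration; alternatively one may take $X_{\lambda_2}$ to present a saddle with a homoclinic loop and use the stretching along it. On each $A_{\lambda_i}$ introduce an angular coordinate normalised so that $\Psi^{t}_{\lambda_i}$ advances the angle at the orbit-dependent rate $1/T_i(\cdot)$, where $T_i$ is the period function.

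The core estimate is the twist-and-cross step, and it is where the particular normal form really enters. Treating each interval separately, $\Psi^{\tau_1}_{\lambda_1}$ winds arcs of $A$ around $c_1$ by the amount $\tau_1/T_1(\cdot)$, which is non-constant on a suitable sub-annulus after a twist computation for the normal form, so the angular increment there ranges over an interval whose length is $\tau_1$ times the oscillation of $1/T_1$; choosing $\tau_1$ in an open interval makes this exceed one full turn, and symmetrically for $\tau_2$ and $T_2$. With that I would choose an oriented topological rectangle $\mathcal R\subset A$ --- an annular sector --- whose two designated sides are the arcs that get dragged all the way around, and verify that $P$ stretches $\mathcal R$ along the paths over itself with multiplicity $m\geq 2$: pairwise disjoint sub-rectangles $\mathcal H_1,\dots,\mathcal H_m\subset\mathcal R$ each stretched by $P$ across $\mathcal R$ between its designated sides. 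Transversality of the relevant orbits to $\{y=0\}$, persistence of the period annuli, the twist inequality and the geometric crossing are all open conditions in $(\lambda_1,\lambda_2)$ and $(\tau_1,\tau_2)$ by continuous dependence of the flow and its derivatives on the data --- this is the source of the open sets and open intervals in the statement.

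Finally I would invoke the topological-horseshoe theorem: a map stretching an oriented rectangle along the paths over itself $m\geq 2$ times is semiconjugate, on a compact invariant subset, to the Bernoulli shift on $m$ symbols, has positive topological entropy and periodic points of every period. For $P$ this yields the chaotic-like dynamics (semiconjugacy to a shift on a finite alphabet) and subharmonic solutions of all orders, hence infinitely many periodic solutions; a fixed point of $P$ inside each covering strip $\mathcal H_i$ (a Brouwer-type argument, since $P$ maps $\mathcal H_i$ across $\mathcal R\supseteq\mathcal H_i$) gives $T$-periodic ones, and letting the sector $\mathcal R$ and the multiplicity $m$ grow along the period annulus produces them in unbounded number. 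The hardest part will be the twist estimate --- proving that the period function of the selected normal form is non-constant and converting this into a lower bound on the winding that holds uniformly over an open range of parameters and switching times --- together with the bookkeeping that keeps $\mathcal R$ and all its $P$-iterates inside the common domain $\Omega$, since the odd-degree normal forms are not complete and one must stay well inside the bounded period annuli with $\tau_1,\tau_2$ bounded.
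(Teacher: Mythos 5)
Your strategy is, at its core, the one the paper follows: reduce to the stroboscopic map $\Phi=\Phi_{\lambda_2}\circ\Phi_{\lambda_1}$, build a linked--twist--map geometry out of invariant regions of the two frozen systems, verify twist and crossing conditions, and conclude via the stretching-along-the-paths version of the topological horseshoe (Theorems 3.1 and 3.3 of the paper). Two points of divergence are worth recording. First, the statement is posed in the paper as a Conjecture and is only \emph{proved} there for the saddle and cusp normal forms (Theorems 4.1 and 4.2); your sketch likewise commits only to the types carrying period annuli, so neither argument settles the nodal and focal cases, and both deliver the conclusion only on part of the parameter range the conjecture quantifies over. Second, where you rely on two genuine period annuli $A_{\lambda_1}\cap A_{\lambda_2}$ (the classical Margheri--Rebelo--Zanolin configuration, which the paper notes recovers only the sub-case $\lambda_1<\lambda_2<0$ of the cusp theorem), the paper's technical novelty is the linkage of an annulus with a topological \emph{strip} (Definition 3.3 and Theorem 3.3): one of the two frozen systems is allowed to have no invariant annulus at all (saddle type with $\lambda_2\le 0$, cusp type with $\lambda_2>0$), its role being played by a band of trajectories that is merely carried once across the annulus, with the entire crossing multiplicity $m\ge 2$ supplied by the other map. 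Relatedly, the paper obtains the unbounded twist not from non-constancy of the period function on a period annulus --- which you correctly single out as the delicate estimate in your plan --- but from the blow-up of the return time at the heteroclinic (saddle case) or homoclinic (cusp case) outer boundary of the annulus, which yields any prescribed crossing number once $\tau_2$ is large enough; adopting that choice of boundary would remove the hardest step from your argument and is what makes the openness in $(\lambda_1,\lambda_2,\tau_1,\tau_2)$ essentially free.
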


The paper is organized as follows. In Section~\ref{section-2} we discuss the classification of plane reversible vector fields of codimension $0$ and $1$. In Section~\ref{section-3} we give a review of the concept of symbolic dynamics and topological horseshoes. We collect preliminary topological results in the phase-plane that can produce chaotic dynamics. In Section~\ref{section-4} we prove Conjecture~\ref{th-intro} for the two of the four normal forms of  codimension 1 reversible vector fields: $i)$ saddle type and  $ii)$ cusp type. We conjecture that the other two possible normal forms, namely $iii)$  nodal type and $iv)$  focal type, may also  be amenable to the same treatment.

\section{Planar reversible systems}\label{section-2} 
In \cite{Te-97}, M.~A.~Teixeira has provided a local classification of 2D reversible systems of codimension less than or equal to two. 
A dynamical system $\dot{X}=V(X)$ is called {\em reversible} if there is a phase space {\em involution} $h$ (i.e.,~$h^2=\mathrm{Id}$) such that $Dh(p)V(p)=-V(h(p))$ for $p\in\mathbb{R}^2$.
We deal with reversible planar systems where the involution is $h(x,y)=(x,-y)$. Hence, we consider a dynamical system of the following form 
\begin{equation}\label{eq-2.1}
\begin{cases}
\dot{x}= y f(x,y^2), \\
\dot{y}= g(x,y^2),
\end{cases}
\end{equation}
where the functions $f$ and $g$ are smooth. We consider the behaviour of \eqref{eq-2.1} near the origin, often making the assumption that it has an equilibrium at the origin.
In the half-plane $y>0$, by using the transformation $u=x$ and $v=y^2$, we can write system \eqref{eq-2.1} equivalently as follows 
\begin{equation}\label{eq-2.2}
\begin{cases}
\dot{u}= \sqrt{v}\, f(u,v), \\
\dot{v}= 2\sqrt{v}\, g(u,v).
\end{cases}
\end{equation}
Through the symmetry properties of the vector field $X(x,y)$ associated with \eqref{eq-2.1}, the behavior of $X$ near $(0,0)$ may
be described by the analysis in the half-plane
$\{(u,v)\in\mathbb{R}^2\colon v\geq 0\}$ of the vector field $Y(u,v)=\left(f(u,v),g(u,v)\right)$.  

\subsection{Normal forms}
Following the work in \cite{Te-97}, the generic equilibria of  reversible ODEs near the origin are either
centers and saddles on the line of symmetry or a couple of repellers and attractors, as in Figure~\ref{fig-1}.

\begin{figure}[h]
\centering
\includegraphics[height=3.5cm]{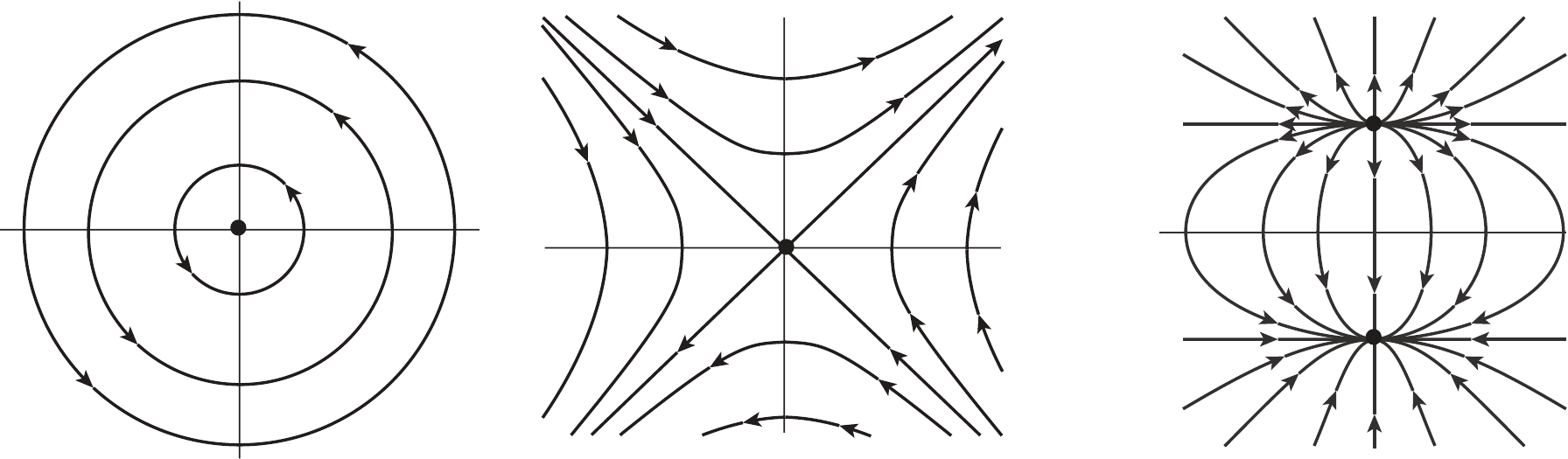} 
\caption{Phase-portraits of equilibria occurring in generic 2D~reversible fields. The local geometry may be of a center~(left), a saddle~(middle), or a pair of attractor and repeller~(right).}\label{fig-1}
\end{figure}
 
Let $S$ be the line $\{(x,0)\colon x\in\mathbb{R}\}$, the set of fixed points for $h$. An equilibrium point of V that lies on $S$ is called a {\em symmetric equilibrium}. 

\begin{theorem}[\cite{Te-97}]\label{normal-0}
The normal forms around a symmetric equilibrium at $(0,0)$ of a structurally stable reversible vector field $X$ are: 
\begin{itemize}
\item $X(x,y)=(y,x)$,
\item $X(x,y)=(y,-x)$.
\end{itemize}
\end{theorem}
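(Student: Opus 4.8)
The plan is to prove the classification by a linearization argument carried out in the reversible category. First I would compute the linear part of \eqref{eq-2.1} at the symmetric equilibrium $(0,0)$. Because the first component carries a factor $y$ and $g$ depends on $y$ only through $y^2$, the Jacobian $DX(0,0)$ is forced into the anti-diagonal form $\left(\begin{smallmatrix} 0 & a \\ b & 0 \end{smallmatrix}\right)$ with $a=f(0,0)$ and $b=\partial_x g(0,0)$, so its eigenvalues are $\pm\sqrt{ab}$. This splits the analysis into three cases according to the sign of $ab$: $ab>0$ (a linear saddle on the line of symmetry $S$), $ab<0$ (a linear center on $S$), and the degenerate case $ab=0$. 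Note that, thanks to the anti-diagonal structure, the hyperbolic case can only be a saddle (the eigenvalues are negatives of each other), never a node.

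The second step is to discard $ab=0$ under the hypothesis of structural stability within the class of reversible vector fields. If $b=0$ then the symmetric equilibria near the origin are the zeros of $x\mapsto g(x,0)$, and the reversible perturbation $g\mapsto g+\varepsilon$ changes their number as $\varepsilon$ changes sign; if $a=0$ then $DX(0,0)$ is nilpotent, and the reversible perturbation $f\mapsto f+\varepsilon$ turns the origin into a saddle or a center depending on the sign of $\varepsilon b$, i.e.\ into topologically inequivalent portraits. In either case $X$ fails to be structurally stable — indeed these are precisely the unfoldings treated later in Section~\ref{section-2}. Hence only $ab>0$ and $ab<0$ remain, and in each of them the linear rescaling $(x,y)\mapsto(\alpha x,\beta y)$, which commutes with the involution $h(x,y)=(x,-y)$ and therefore preserves reversibility, can be chosen to normalize $(a,b)$ to $(1,1)$ or to $(1,-1)$, putting the linear part into the claimed form $(y,x)$ or $(y,-x)$.

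The third step removes the higher-order terms. When $ab>0$ the origin is a hyperbolic saddle, so the Hartman–Grobman theorem furnishes a local topological conjugacy between the flow of $X$ and that of $(y,x)$; if one additionally wants the conjugacy to be $h$-equivariant, one symmetrizes the Hartman–Grobman homeomorphism using that $h\circ\Phi\circ h$ is again a conjugacy, which follows from $h\circ\varphi^t_X=\varphi^{-t}_X\circ h$ and the analogous identity for the linear flow. When $ab<0$ the linear part is a center and hyperbolicity is unavailable, but here reversibility is decisive: for $x_0\neq 0$ small one has $\dot y(x_0,0)=g(x_0,0)\approx bx_0\neq 0$, so an orbit through such a point crosses $S$ transversally, and applying $h$ forces it to return to $S$ and close up; thus a full punctured neighbourhood of $(0,0)$ is filled by nested $h$-symmetric ovals shrinking to the equilibrium, a portrait topologically conjugate to that of $(y,-x)$ via the first-return map to $S$.

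I expect the center case to be the delicate point. One must verify that the return argument really produces closed orbits in a \emph{full} neighbourhood — so that the origin is a genuine topological center and not a focus or a more intricate configuration — controlling the orbits that approach the equilibrium tangentially to $S$ by comparison with the linear center, and then one must assemble a topological conjugacy to $(y,-x)$ that is simultaneously $h$-equivariant (matching the two half-return maps across $S$). The exclusion of the degenerate case in the second step is routine but must be phrased with care, since here ``structural stability'' is meant relative to reversible perturbations, and the perturbations that witness instability are exactly the low-codimension unfoldings that motivate the rest of the paper.
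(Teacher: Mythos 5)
The paper offers no proof of this statement: Theorem~\ref{normal-0} is imported verbatim from Teixeira~\cite{Te-97}, so your argument is necessarily an independent route, and it is essentially the standard one for this classification. The linearization step is correct: for the reversible form \eqref{eq-2.1} the Jacobian at a symmetric equilibrium is forced to be anti-diagonal with entries $a=f(0,0)$ and $b=\partial_x g(0,0)$, the spectrum is $\{\pm\sqrt{ab}\}$, and the trichotomy saddle/centre/degenerate according to the sign of $ab$ is exactly what underlies the two normal forms (incidentally, the sentence following the theorem in the paper has the two cases swapped: $(y,x)$ is the saddle and $(y,-x)$ the centre, as your computation shows). The exclusion of $ab=0$ by exhibiting nearby reversible perturbations with topologically inequivalent portraits, and the reversibility argument in the centre case (transversal crossing of $S$ plus $h$-invariance of the orbit through a point of $S$ forces closure, so the linear centre cannot degenerate into a focus), are both sound and are precisely the two places where reversibility, rather than generic ODE theory, does the work. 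Two details need repair. First, the spatial rescaling $(x,y)\mapsto(\alpha x,\beta y)$ alone sends $(a,b)$ to $(\alpha a/\beta,\,\beta b/\alpha)$ and therefore leaves the product $ab$ invariant, so it cannot normalize $(a,b)$ to $(1,\pm 1)$ by itself; you also need a constant time rescaling $t\mapsto ct$, which is harmless since the normal form is only claimed up to topological equivalence. Second, ``symmetrizing the Hartman--Grobman homeomorphism'' is not an operation one can perform by averaging: the average of two conjugating homeomorphisms need not be a homeomorphism, and an $h$-equivariant conjugacy requires an explicit construction (for a planar saddle, from the invariant manifolds and fundamental domains). Since the theorem as stated only asks for a topological normal form, plain Hartman--Grobman already suffices in the saddle case, so this second point costs you nothing; the genuinely delicate step, which you correctly single out, is assembling the conjugacy to $(y,-x)$ in the centre case from the return map to $S$.
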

In the first case the origin is a center, and in the second one it is a saddle.
The next result classifies one parameter families $X_{\lambda}$ of reversible vector fields such that $X_0$ has a symmetric equilibrium at the origin. 
\begin{theorem}[\cite{Te-97}]\label{normal-1}
The normal forms of one-parameter families of structurally stable reversible vector fields $X_{\lambda}$ near a symmetric equilibrium at $(0,0)$ are:
\begin{enumerate}[leftmargin=*,label=$\roman*)$]
\item saddle type: $X_\lambda(x,y)=(xy,x-y^2+\lambda)$,
\item cusp type: $X_\lambda(x,y)=(y,x^2+\lambda)$,
\item nodal type: $X_\lambda(x,y)=(xy,x+2y^2+\lambda)$ or $X_\lambda(x,y)=(-xy,x-2y^2+\lambda)$,
\item focal type: $X_\lambda(x,y)=(xy +y^3, -x+ y^2+\lambda)$.
\end{enumerate}
\end{theorem}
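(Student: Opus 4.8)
The plan is to carry out a singularity/normal-form reduction for the reversible field \eqref{eq-2.1}, organised around the associated half-plane vector field $Y(u,v)=(f(u,v),g(u,v))$. At a symmetric equilibrium we have $g(0,0)=0$, so
\[
DX(0,0)=\begin{pmatrix} 0 & f(0,0)\\ g_x(0,0) & 0\end{pmatrix},
\]
with eigenvalues $\pm\sqrt{f(0,0)\,g_x(0,0)}$. Theorem~\ref{normal-0} corresponds to the open condition $f(0,0)\,g_x(0,0)\neq 0$ (imaginary eigenvalues give a center, which is robust among reversible fields, real ones give a saddle); hence the codimension-one stratum is $f(0,0)\,g_x(0,0)=0$ with exactly one of the two factors vanishing, the simultaneous vanishing of both being readily checked to have higher codimension. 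This splits the analysis into two branches, and all the identifications below are meant up to topological equivalence preserving the involution $h$.

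First branch: $f(0,0)=0$, $g_x(0,0)\neq0$. Then the origin is also an equilibrium of $Y$, with linear part $A=\bigl(\begin{smallmatrix} f_u & f_v\\ g_u & g_v\end{smallmatrix}\bigr)(0,0)$ whose lower-left entry is nonzero. Imposing the remaining genericity --- $\det A\neq0$ and $A$ having distinct eigenvalues --- leaves three mutually exclusive, each open, possibilities: two real eigenvalues of opposite sign, two of the same sign, or a complex conjugate pair. A linear change of variables of the admissible type (one preserving the shape \eqref{eq-2.1}, combined with a positive rescaling of time) normalises $A$; a finite-determinacy argument using only such transformations removes the higher-order terms; and an unfolding computation shows that adding a single constant to the $g$-component is a versal perturbation. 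This yields, respectively, the saddle form $i)$, the nodal forms $iii)$ (the two sub-cases corresponding to the reduced field being attracting or repelling at the equilibrium), and the focal form $iv)$.

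Second branch: $g_x(0,0)=0$, $f(0,0)\neq0$. Here $Y(0,0)=(f(0,0),0)\neq(0,0)$, so the origin is a regular point of $Y$, while the symmetry line $\{v=0\}$ has a degenerate contact with $Y$, since $g(0,0)=g_x(0,0)=0$ but generically $g_{xx}(0,0)\neq0$. Rescaling time by the positive function $1/f$ reduces the system to the Newton-type form $\ddot x=\tilde g(x,\dot x^2)$ with $\tilde g$ of $2$-jet $\lambda+\tfrac12\tilde g_{xx}(0,0)\,x^2$; an admissible rescaling of $x$, a sign change if $\tilde g_{xx}(0,0)<0$, and removal of the higher-order terms bring this to $\ddot x=x^2+\lambda$, i.e. the cusp form $ii)$, with $\lambda$ already in versal position.

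What one must keep checking throughout is that the coordinate changes used are exactly those preserving \eqref{eq-2.1}, so the normalised field stays reversible with the involution $h$; that no hidden modulus survives the normalisation; and --- the step I expect to be the real obstacle --- that the four families listed are structurally stable, equivalently that the chosen degeneracies are of codimension exactly one and their one-parameter unfoldings are versal. Versality is routine for the saddle, nodal and cusp cases, but in the focal case it requires reconciling a Hopf-like birth of an attractor--repeller pair for the reduced field $Y$ with the constraint $v\ge0$ and with the reversibility of $X$, and this is where the delicate work lies.
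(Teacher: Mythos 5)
First, a point of context: the paper does not prove Theorem~\ref{normal-1} at all --- it is imported verbatim from Teixeira's classification \cite{Te-97} --- so there is no internal argument to measure your proposal against. Judged on its own terms, your skeleton is the right one and is essentially the route of \cite{Te-97}: pass to the half-plane field $Y(u,v)=(f(u,v),g(u,v))$, note that $DX(0,0)$ has eigenvalues $\pm\sqrt{f(0,0)\,g_x(0,0)}$, identify the codimension-one stratum as the vanishing of exactly one of the two factors, and split into the branch where $Y$ is itself singular at the origin (the saddle/node/focus trichotomy for its linear part producing forms $i)$, $iii)$, $iv)$) and the branch where $Y$ is regular but has quadratic contact with the symmetry line (producing the cusp $ii)$). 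Your assignment of normal forms to strata checks out against the listed families.

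However, as a proof this is a plan rather than an argument, and the steps you defer are exactly the substantive content of the theorem. Three gaps are genuine. (1) Finite determinacy and versality under the restricted pseudogroup of coordinate changes preserving the form \eqref{eq-2.1} are asserted, not established; for topological (as opposed to formal or smooth) normal forms of a one-parameter family this is where essentially all the work lies, and ``no hidden modulus survives'' is a conclusion, not a checkable intermediate step. (2) In the cusp branch, after rescaling time by $1/f$ the right-hand side $\tilde g(x,\dot x^2)$ still generically carries a term $\tilde g_v(0,0)\,\dot x^2$; you silently drop it, and its removal (or the proof that it does not change the topological class of the unfolding) needs an argument. (3) The focal case cannot be settled by the linear part of $Y$ alone: a focus of the reduced field corresponds, through the folding $v=y^2$ and the reversibility, to a center of $X$ on the symmetry line (all orbits periodic for $\lambda\ge 0$, as in Figure~\ref{figFocal}), and distinguishing a center from a focus is not a finite-jet condition in general --- here it is forced by the symmetry of the return map to $S$, which is precisely the argument you flag as ``delicate'' but do not supply. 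So your proposal correctly reconstructs the architecture of the classification but does not yet constitute an independent proof of it.
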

Depending on $\lambda$, the phase-portraits of the above normal forms can be described as follows. 

\begin{figure}[htb]
\centering
\includegraphics[height=3.5cm]{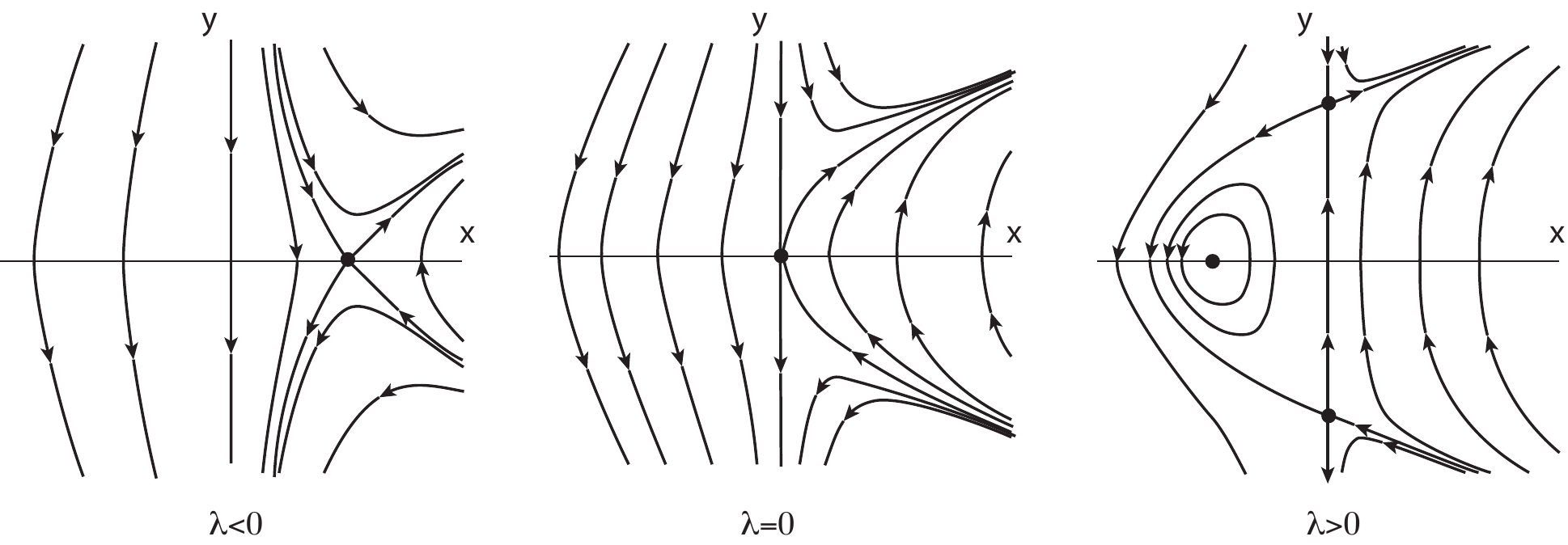} 
\caption{Phase-portraits reversible vector fields of saddle type.
}\label{figSaddle}
\end{figure}

Figure~\ref{figSaddle} shows the phase portraits of the \textit{saddle type}.
When $\lambda\leq0$ there is an equilibrium at $(-\lambda,0)$ which is a saddle. When $\lambda>0$ there are three equilibria: a center and two saddles at $(-\lambda,0)$, $(0,-\sqrt{\lambda})$ and $(0,\sqrt{\lambda})$, respectively. The saddle points are connected through heteroclinic trajectories which surround periodic orbits.

\begin{figure}[htb]
\centering
\includegraphics[height=3.5cm]{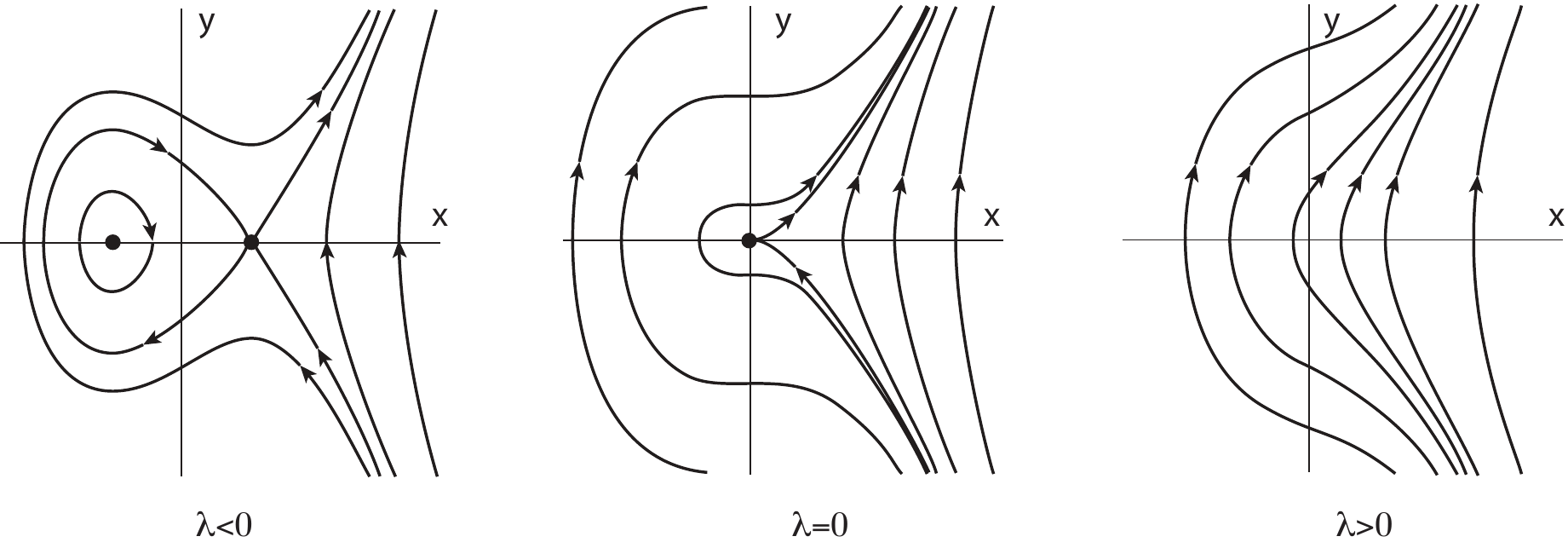} 
\caption{Phase-portraits reversible vector fields of cusp type.
}\label{figCusp}
\end{figure}

Concerning the \textit{cusp type} when $\lambda<0$ there are two equilibria: a center and a saddle which are at $(-\sqrt{-\lambda},0)$ and $(\sqrt{-\lambda},0)$, respectively. Due to the reversibility, the only periodic orbits are the ones that meet the points $(x,0)$ with $-2\sqrt{-\lambda}<x<\sqrt{-\lambda}$, as in Figure~\ref{figCusp}. Moreover, these orbits are located inside the homoclinic trajectory that passes through $(-2\sqrt{-\lambda},0)$. When $\lambda=0$ there is only an equilibrium which is a degenerate saddle at $(0,0)$ and all the orbits are unbounded. When $\lambda>0$ there are no equilibria. 
 
\begin{figure}[htb]
\centering
\includegraphics[height=3.5cm]{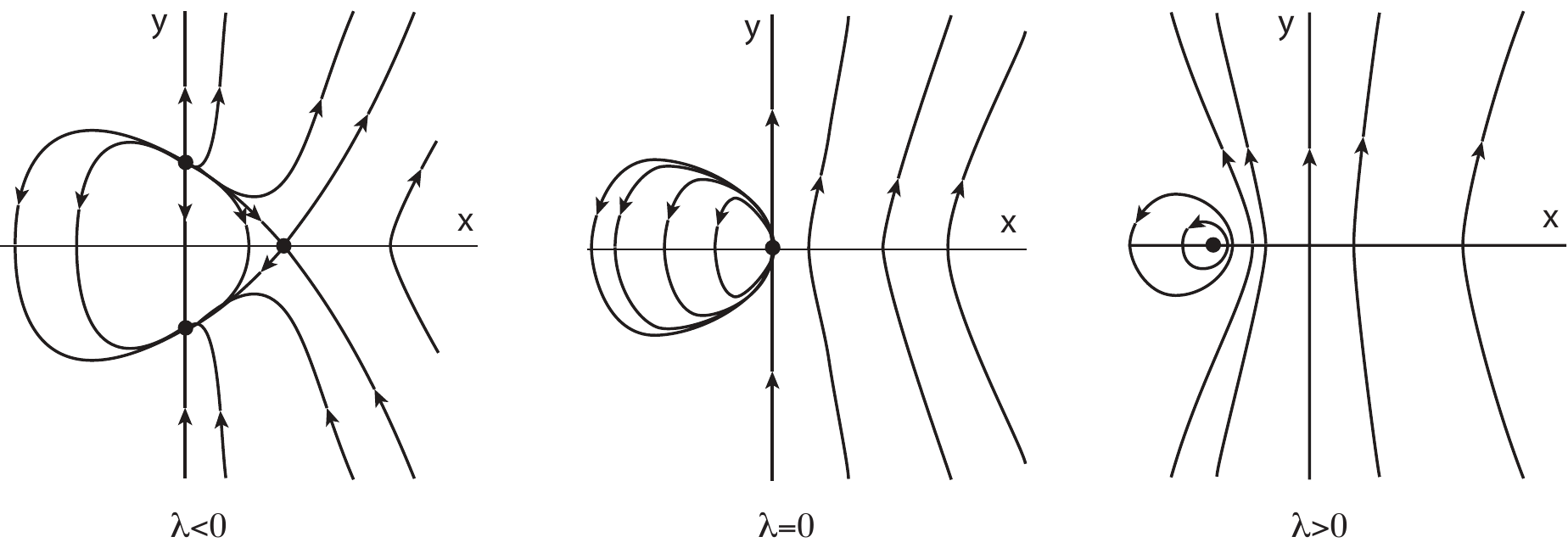} 
\caption{Phase-portraits reversible vector fields of nodal type  (first case).
}\label{figNodal}
\end{figure}

For the \textit{nodal type} (first case, shown in Figure~\ref{figNodal}) when $\lambda<0$ there are three equilibria: an attractor, a repeller and a saddle, located respectively at $(0,-\sqrt{-\lambda/2})$, $(0,\sqrt{-\lambda/2})$ and $(-\lambda,0)$. When $\lambda=0$ there is only an equilibrium at $(0,0)$. When $\lambda>0$ there is only an equilibrium at $(-\lambda,0)$ which is a center and in the half-plane $x<0$ all the orbits are periodic. In the second case there is always an equilibrium at $(-\lambda,0)$ and for  $\lambda>0$ there is also a pair of equilibria at $(0,\pm\sqrt{\lambda/2})$.

\begin{figure}[htb]
\centering
\includegraphics[height=3.5cm]{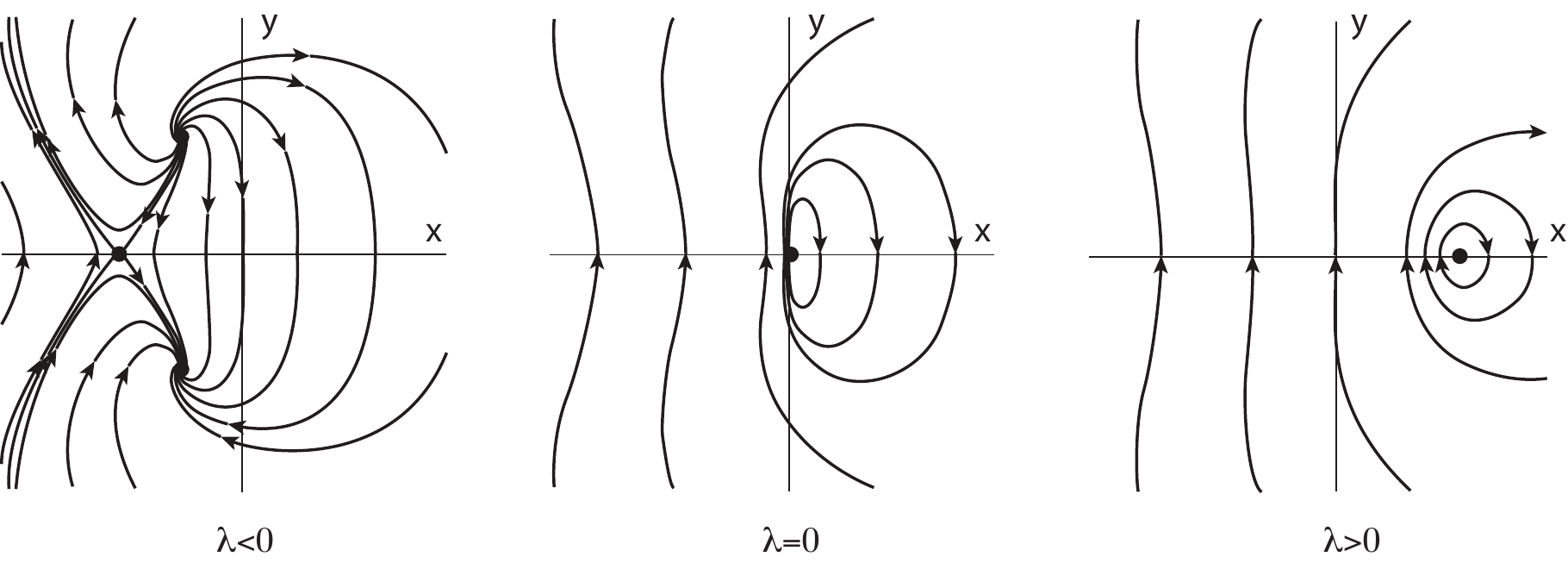} 
\caption{Phase-portraits reversible vector fields of focal type.
}\label{figFocal}
\end{figure}

For the \textit{focal type} when $\lambda<0$ there are three equilibria: a saddle  and two foci at $(\lambda,0)$, $(\lambda/2,-\sqrt{-\lambda/2})$ and $(\lambda/2,\sqrt{-\lambda/2})$, respectively. When $\lambda\geq0$ there is only an equilibrium at $(\lambda,0)$ which is a center and all the orbits are periodic as in Figure~\ref{figFocal}.

\section{Background on chaotic dynamics and preliminary results}\label{section-3}

\subsection{Symbolic dynamics and chaos}\label{subsection-3.1}
To review the topological approach exploited throughout the paper, we start by introducing some notation and definitions of symbolic dynamics. General information on the subject may be found in the book by Guckenheimer and Holmes~\cite{GH}, with examples in Chapter~2 and a more general case in Chapter~5.
A more detailed treatment is given by Wiggins and Ottino~\cite{WiOt-04}. 
The point of view used here is similar to that of Kennedy and Yorke in~\cite{KeYo-01}  of Margheri~\emph{et al} in~\cite{MRZ-10} and of Medio~\emph{et al} in~\cite{MPZ-09}.

Let $\Sigma_m:=\{0,\dots,m-1\}^{\mathbb{Z}}$ be the set of all two-sided sequences $S=(s_i)_{i\in\mathbb{Z}}$ with $s_i\in \{0,\dots,m-1\}$ for each $i\in\mathbb{Z}$ endowed with a standard metric that makes $\Sigma_m$ a compact space with the product topology. We define the shift map $\sigma\colon \Sigma_m \to \Sigma_m$ by $\sigma(S) = S'=(s'_i)_{i\in\mathbb{Z}}$ with $s'_i = s_{i+1}$ for all $i\in {\mathbb Z}.$
We say that  a map $h$ on a metric space is semiconjugate (respectively, conjugate) to the shift map on $m$ symbols if there exists a compact invariant set $\Lambda$ and a continuous and surjective (respectively, bijective) map $\Pi \colon \Lambda \to \Sigma_m$ such that $\Pi\circ h(w)= \sigma\circ\Pi(w),$ for all $w\in \Lambda.$ 

The deterministic chaos is usually associated with the possibility to reproduce all the possible outcomes of a coin-tossing experiment, by varying the initial conditions within the dynamical system. We can express this concept using the symbolic dynamics of the shift map on the sets of two-sided sequences of $2$ symbols. However, by considering a finite alphabet made by $m$ symbols the possible dynamics can be more complex. Hence, in the sequel we adopt the following definition of chaos (cf.,~\cite{MRZ-10,MPZ-09}).

\begin{definition}[Symbolic dynamics]\label{def-symb}
Let $h\colon\mathrm{dom}\,h\subseteq\mathbb{R}^{2}\to\mathbb{R}^{2}$ be a map and
let $\mathcal{D}\subseteq\mathrm{dom}\,h$ be a nonempty set.
We say that \emph{$h$ induces chaotic dynamics on $m\geq 2$ symbols on a set $\mathcal{D}$}
if there exist $m$ nonempty pairwise disjoint compact sets
$\mathcal{K}_{0}, \dots, \mathcal{K}_{m-1}\subseteq\mathcal{D}$
such that for each two-sided sequence $(s_{i})_{i\in\mathbb{Z}}\in\Sigma_{m}$
there exists a corresponding sequence $(w_{i})_{i\in\mathbb{Z}}\in\mathcal{D}^{\mathbb{Z}}$ such that
\begin{equation}\label{eq-ap1}
w_{i}\in\mathcal{K}_{s_{i}} \text{ and } w_{i+1}=h(w_{i}) \text{ for all } i\in\mathbb{Z},
\end{equation}
and, whenever $(s_{i})_{i\in\mathbb{Z}}\in\Sigma_{m}$ is a $k$-periodic sequence for some $k\geq1$ there exists a
$k$-periodic sequence $(w_{i})_{i\in\mathbb{Z}}\in\mathcal{D}^{\mathbb{Z}}$ satisfying \eqref{eq-ap1}.
\end{definition}

For a one-to-one map $h$, Definition~\ref{def-symb} ensures the existence of a nonempty compact invariant set $\Lambda\subseteq\cup^{m-1}_{i=0}\mathcal{K}_{i}\subseteq\mathcal{D}$ and a continuous surjection $\Pi$ such that $h_{|\Lambda}$ is semiconjugate to the Bernoulli shift map on $m\geq 2$ symbols.
Moreover, it guarantees that the set of the periodic points of $h$ is dense in $\Lambda$ and, for all two-sided periodic 
sequences $S\in\Sigma_{m}$, the preimage $\Pi^{-1}(S)$ contains a periodic point of $h$ with the same period (cf.~\cite[Th.~2.2]{MPZ-09}). In this respect Definition~\ref{def-symb} is related,  by means of \cite[Th.~2.3]{MPZ-09}, to the concept of topological horseshoe introduced in~\cite{KeYo-01}. This is a weaker notion of chaos than the Smale's horseshoe (see~\cite[ch.~5]{GH}) because the latter requires the full conjugacy between $h_{|\Lambda}$ and the shift map on $m$ symbols.

We introduce the notion of an oriented topological rectangle and the stretching along the path property by borrowing the notations and definitions from~\cite{MRZ-10,PVZ-18}. The pair $\widehat{\mathcal{R}}:=(\mathcal{R},\mathcal{R}^{-})$ is called \emph{oriented topological rectangle}
if $\mathcal{R}\subseteq \mathbb{R}^{2}$ is a set homeomorphic to $[0,1]\times[0,1]$, and $\mathcal{R}^{-}=\mathcal{R}^{-}_{l}\cup\mathcal{R}^{-}_{r}$, where $\mathcal{R}^{-}_{l}$ and $\mathcal{R}^{-}_{r}$ are two disjoint compact arcs contained in $\partial\mathcal{R}.$

\begin{definition}[SAP property]\label{def-stretching}
Given two topological oriented rectangles $\widehat{\mathcal{R}}_1:=(\mathcal{R}_1,\mathcal{R}_1^{-})$,
$\widehat{\mathcal{R}}_2:=(\mathcal{R}_2,\mathcal{R}_2^{-})$ and a continuous map $h:\mathrm{dom}\,h\subseteq\mathbb{R}^{2}\to\mathbb{R}^{2}$,
we say that $h$ \emph{stretches $\widehat{\mathcal{R}}_1$ to $\widehat{\mathcal{R}}_2$ along the paths}
if there exists a compact subset $\mathcal{K}$ of $\mathcal{R}_1\,\cap\mathrm{dom}\,h$
and for each path $\gamma\colon[0,1]\to\mathcal{R}_1$ such that $\gamma(0)\in\mathcal{R}^{-}_{1,l}$
and $\gamma(1)\in\mathcal{R}^{-}_{1,r}$ (or vice-versa), there exists $[t_0,t_1]\subseteq[0,1]$ such that
\begin{itemize}
\item $\gamma(t)\in \mathcal{K}$ for all $t\in[t_0,t_1]$,
\item $h(\gamma(t))\in\mathcal{R}_2$ for all $t\in[t_0,t_1]$,
\item $h(\gamma(t_0))$ and $h(\gamma(t_1))$ belong to different components of $\mathcal{R}_2^{-}$.
\end{itemize}
In this case, we write
\[(\mathcal{K},h)\colon\widehat{\mathcal{R}}_1\mathrel{\Bumpeq\!\!\!\!\!\!\longrightarrow}\widehat{\mathcal{R}}_2.\]
\end{definition}
Given a positive integer $m$, we say that $h$ \emph{stretches $\widehat{\mathcal{R}}_1$ to $\widehat{\mathcal{R}}_2$ along the paths with crossing number} $m$
and we write
\[h\colon\widehat{\mathcal{R}}_1\mathrel{\Bumpeq\!\!\!\!\!\!\longrightarrow^{m}}\widehat{\mathcal{R}}_2\]
if there exist $m$ pairwise disjoint compact sets $\mathcal{K}_{0},\dots,\mathcal{K}_{m-1}\subseteq \mathcal{R}_1\cap\mathrm{dom}\,h$
such that $(\mathcal{K}_{i},h)\colon\widehat{\mathcal{R}}_1\mathrel{\Bumpeq\!\!\!\!\!\!\longrightarrow}\widehat{\mathcal{R}}_2$ for each $i\in\{0,\dots,m-1\}$.

Finally, in order to detect chaos, a useful topological tool is the Stretching Along the Paths (SAP) method introduced in~\cite{MPZ-09}. In our framework, it can be stated as follows (cf.,~\cite[Th.~2.1]{MRZ-10}).
\begin{theorem}[SAP method]\label{th-app1}
Let $h_1\colon\mathrm{dom}\,\nu\subseteq\mathbb{R}^{2}\to\mathbb{R}^{2}$ and
$h_2\colon\mathrm{dom}\,\eta\subseteq\mathbb{R}^{2}\to\mathbb{R}^{2}$ be continuous maps.
Let $\widehat{\mathcal{R}}_1=(\mathcal{R}_1,\mathcal{R}_1^{-})$ and $\widehat{\mathcal{R}}_2=(\mathcal{R}_2,\mathcal{R}_2^{-})$
be two oriented rectangles in $\mathbb{R}^{2}$. Suppose that
\begin{itemize}
\item there exist $n\geq 1$ pairwise disjoint compact subsets of $\mathcal{R}_1\,\cap\,\mathrm{dom}\,\nu$,
$\mathcal{Q}_{0},$ $\dots,$ $\mathcal{Q}_{n-1}$, such that
$(\mathcal{Q}_{i},h_1)\colon\widehat{\mathcal{R}}_1\mathrel{\Bumpeq\!\!\!\!\!\!\longrightarrow}\widehat{\mathcal{R}}_2$ for $i=0,\dots,n-1$,
\item there exist $m\geq 1$ pairwise disjoint compact subsets of $\mathcal{R}_2\,\cap\,\mathrm{dom}\,\eta$, 
$\mathcal{K}_{0},$ $\dots,$ $\mathcal{K}_{m-1}$, such that
$(\mathcal{K}_{i},h_2)\colon\widehat{\mathcal{R}}_2\mathrel{\Bumpeq\!\!\!\!\!\!\longrightarrow}\widehat{\mathcal{R}}_1$ for $i=0,\dots,m-1$.
\end{itemize}
If at least one between $n$ and $m$ is greater than or equal to $2$, then the map $h=h_2\circ h_1$
induces chaotic dynamics on $n\times m$ symbols on
\[\mathcal{Q}^{*}=\bigcup_{\substack{i=0,\dots,n-1 \\j=0,\dots,m-1}} \mathcal{Q}_{i}\cap\nu^{-1}(\mathcal{K}_{j}).\]
\end{theorem}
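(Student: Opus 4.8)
The plan is to deduce the two--map statement from the single--map case: I would set $g:=h_2\circ h_1$ and show that $g$ stretches the \emph{same} rectangle $\widehat{\mathcal{R}}_1$ onto itself along the paths with crossing number $N:=n\,m\geq 2$, the $N$ witnessing compact sets being exactly the sets $\mathcal{C}_{i,j}:=\mathcal{Q}_i\cap\nu^{-1}(\mathcal{K}_j)$ whose union is $\mathcal{Q}^{*}$. Once this is done, the conclusion is the standard fact that a continuous map which stretches a topological rectangle onto itself along the paths with crossing number at least $2$ induces chaotic dynamics in the sense of Definition~\ref{def-symb}. Note that each $\mathcal{C}_{i,j}$ is a compact subset of $\mathcal{R}_1\cap\mathrm{dom}\,\nu$, the $\mathcal{C}_{i,j}$ are pairwise disjoint because the $\mathcal{Q}_i$ are and the $\mathcal{K}_j$ are, and for $w\in\mathcal{C}_{i,j}$ one has $h_1(w)\in\mathcal{K}_j\subseteq\mathrm{dom}\,\eta$, so $g(w)$ is defined.

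First I would carry out this concatenation. Fix $i,j$ and let $\gamma\colon[0,1]\to\mathcal{R}_1$ be a path with endpoints in the two components of $\mathcal{R}_1^{-}$. By $(\mathcal{Q}_i,h_1)\colon\widehat{\mathcal{R}}_1\mathrel{\Bumpeq\!\!\!\!\!\!\longrightarrow}\widehat{\mathcal{R}}_2$ there is $[t_0,t_1]\subseteq[0,1]$ on which $\gamma(t)\in\mathcal{Q}_i$, $h_1(\gamma(t))\in\mathcal{R}_2$, and $h_1(\gamma(t_0)),h_1(\gamma(t_1))$ lie in different components of $\mathcal{R}_2^{-}$; thus $t\mapsto h_1(\gamma(t))$ on $[t_0,t_1]$ is, up to reparametrisation, a path across $\widehat{\mathcal{R}}_2$. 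Applying $(\mathcal{K}_j,h_2)\colon\widehat{\mathcal{R}}_2\mathrel{\Bumpeq\!\!\!\!\!\!\longrightarrow}\widehat{\mathcal{R}}_1$ to it gives $[t_0',t_1']\subseteq[t_0,t_1]$ on which $h_1(\gamma(t))\in\mathcal{K}_j$, $h_2(h_1(\gamma(t)))\in\mathcal{R}_1$, and the two endpoints are carried by $h_2$ into different components of $\mathcal{R}_1^{-}$. On $[t_0',t_1']$ we then have $\gamma(t)\in\mathcal{Q}_i\cap\nu^{-1}(\mathcal{K}_j)=\mathcal{C}_{i,j}$, $g(\gamma(t))\in\mathcal{R}_1$, and $g(\gamma(t_0')),g(\gamma(t_1'))$ in different components of $\mathcal{R}_1^{-}$; the orientation is irrelevant since Definition~\ref{def-stretching} allows ``or vice--versa''. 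This is precisely $(\mathcal{C}_{i,j},g)\colon\widehat{\mathcal{R}}_1\mathrel{\Bumpeq\!\!\!\!\!\!\longrightarrow}\widehat{\mathcal{R}}_1$, so $g$ stretches $\widehat{\mathcal{R}}_1$ onto itself with crossing number $N=nm\geq2$.

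Next I would invoke the self--stretching $\Rightarrow$ symbolic dynamics principle (the content of \cite[Th.~2.2, Th.~2.3]{MPZ-09}, building on \cite{KeYo-01}) for $g$ with the sets $\mathcal{C}_{i,j}$ relabelled as $\mathcal{C}_0,\dots,\mathcal{C}_{N-1}$, so $\mathcal{Q}^{*}=\bigcup_\ell\mathcal{C}_\ell$. Its mechanism: for an arbitrary $(s_\ell)_{\ell\in\mathbb{Z}}\in\Sigma_N$ one constructs, for each finite window $[-p,q]$, a continuum inside $\bigcap_{-p\leq\ell\leq q}g^{-\ell}(\mathcal{C}_{s_\ell})$ that still crosses $\widehat{\mathcal{R}}_1$, using repeatedly the fact that for a continuum $\Gamma$ crossing $\widehat{\mathcal{R}}_1$ and each index there is a subcontinuum of $\Gamma\cap\mathcal{C}_\ell$ whose $g$--image again crosses $\widehat{\mathcal{R}}_1$; letting $p,q\to\infty$ and using compactness of $\mathcal{R}_1$ yields a point $w_0$ with $g^{\ell}(w_0)\in\mathcal{C}_{s_\ell}$ for all $\ell\in\mathbb{Z}$, that is, a full $g$--orbit coding $(s_\ell)$. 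When $(s_\ell)$ is $k$--periodic one instead checks that $H:=\bigcap_{\ell=0}^{k-1}g^{-\ell}(\mathcal{C}_{s_\ell})$ satisfies $(H,g^{k})\colon\widehat{\mathcal{R}}_1\mathrel{\Bumpeq\!\!\!\!\!\!\longrightarrow}\widehat{\mathcal{R}}_1$ and applies the planar fixed--point lemma for self--stretchings (a Poincar\'e--Miranda / degree argument) to obtain a fixed point of $g^{k}$ in $H$, whose orbit is $k$--periodic with the prescribed itinerary. Together these verify both clauses of Definition~\ref{def-symb}, so $g=h_2\circ h_1$ induces chaotic dynamics on $N=n\times m$ symbols on $\mathcal{Q}^{*}=\bigcup_{i,j}\mathcal{Q}_i\cap\nu^{-1}(\mathcal{K}_j)$.

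The concatenation step is routine bookkeeping; the real work sits in the principle quoted in the previous paragraph, and the delicate point there is the passage from ``stretching along \emph{paths}'' (images of $[0,1]$) to the ``crossing of \emph{continua}'' needed in the intersection argument, since arbitrary continua need not be arcwise connected, together with verifying that the limit point produced from the nested continua is genuinely a bi--infinite orbit even though $h_1$ and $h_2$ are only partially defined, and that the fixed point of $g^{k}$ really lands in the correct cylinder set. All of this is already established in~\cite{MPZ-09}, so for the present theorem it suffices to cite it once Step~1 has recast the hypotheses as a self--stretching with crossing number at least $2$.
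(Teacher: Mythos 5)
Your argument is correct and is essentially the proof that the paper itself defers to: the paper offers no independent proof of Theorem~\ref{th-app1}, citing \cite[Th.~2.1]{MRZ-10}, and that reference proceeds exactly as you do --- concatenating the two stretching relations to get a self-stretching of $\widehat{\mathcal{R}}_1$ with crossing number $nm\geq 2$ on the sets $\mathcal{Q}_i\cap\nu^{-1}(\mathcal{K}_j)$, then invoking the single-map SAP theorem of \cite{MPZ-09}. Your composition step and the compactness/disjointness checks are sound, so nothing further is needed.
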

\noindent For the proof of Theorem~\ref{th-app1} we refer to~\cite[Th.~2.1]{MRZ-10}.

\subsection{Topological tools in the phase-plane}\label{subsection-3.2} 
The geometry associated to the phase-portrait of~\eqref{eq-2.1} exhibits unbounded solutions and periodic trajectories. 
These configurations guarantee the existence of two types of invariant regions: topological strips and topological annuli confined between unbounded and bounded solutions, respectively. In this section we will give some preliminary topological results on the phase-plane $(x,y)$ needed to establish the dynamics induced by~\eqref{eq-2.1}. 

By a {\em topological strip}~$\mathcal{S}$ we mean the image of a straight strip of finite width $\mathbf{S}:=\{(x,y)\in\mathbb{R}^2\colon x_1<x<x_2,\, -1\leq y\leq 1\}$ through a locally defined homeomorphism 
\[
h_\mathbf{S}\colon (x_1,x_2)\times[-1,1] \to \mathcal{S}.
\] 
Let a \textit{bridge} in~$\mathcal{S}$ be the image by $h_\mathbf{S}$ of any simple continuous curve $\gamma\colon \mathopen{[}a,b\mathclose{]}\to \mathbf{S}$ such that $\gamma(a)=(\hat{x},-1)$ and $\gamma(b)=(\check{x},1)$ for some $\hat{x},\,\check{x}\in(x_1,x_2)$ or, viceversa,  $\gamma(a)=(\check{x},1)$ and $\gamma(b)=(\hat{x},-1)$.

A {\em topological annulus}~$\mathcal{A}$ is defined as the image of a rectangular region~$\mathbf{A}:=\{(x,y)\in\mathbb{R}^2\colon 1\leq x\leq 2,\, -1\leq y\leq 1\}$ through a continuous map
\[
h_\mathbf{A}\colon [1,2]\times[-1,1] \to \mathcal{A},
\]
such that  the restriction of $h_\mathbf{A}$ to $(1,2)\times[-1,1]$ is a homeomorphism and $h_\mathbf{A}(1,y)=h_\mathbf{A}(2,y)$. We notice that the restriction to $(1,2)\times[-1,1]$ yields a strip. Moreover, the boundary of the topological annulus $\partial \mathcal{A}$ is the union of two Jordan curves $\partial^i \mathcal{A}:=h_\mathbf{A}(x,-1)$ and $\partial^e \mathcal{A}:=h_\mathbf{A}(x,1)$. 
We denote the portion of the plane outside a generic Jordan curve $\Gamma$ by $out(\Gamma)$ and the one inside by $in(\Gamma)$. For identification purposes, let $\partial^i \mathcal{A}\subset in(\partial^ e\mathcal{A})$. In this manner, we can identify two connected sets, one bounded and another one unbounded given by $in(\partial^i \mathcal{A})$ and $out(\partial^e \mathcal{A})$, respectively. Let a \textit{ray} in~$\mathcal{A}$ be any simple continuous curve $\gamma\colon \mathopen{[}a,b\mathclose{]}\to \mathcal{A}$ such that $\gamma(a)\in \partial^i \mathcal{A}$ and $\gamma(b)\in \partial^e \mathcal{A}$ or, viceversa, $\gamma(a)\in \partial^e \mathcal{A}$ and $\gamma(b)\in \partial^i \mathcal{A}$.

We are interested in crossing configurations between either an annulus and a strip or two annuli. In particular we are looking for similarities with the geometry of the linked-twist maps (see~\cite{PPZ-08,WiOt-04}). Hence, we introduce the following definition and in Figure~\ref{fig-link} we provide a visual representation of the linkage condition between an annulus and a strip.

\begin{definition}[Linkage condition]\label{def-link}
Let $\mathcal{A}$ be a topological annulus and $\mathcal{S}$ be a topological strip. We say that $\mathcal{A}$ is linked with $\mathcal{S}$ if there exist a bridge $\gamma_1$ in~$\mathcal{S}$, a ray $\gamma_2$ in~$\mathcal{A}$, and a topological ball $B$ containing $\mathcal{A}$ such that:
\begin{itemize}
\item $\gamma_1\subset in(\partial^i \mathcal{A})$;
\item $\gamma_2\cap \mathcal{S}=\emptyset$;
\item $(\mathcal{S}\setminus\gamma_1)\cap\partial B$ consists of exactly two disjoint bridges.
\end{itemize}
\end{definition}
From Definition~\ref{def-link} we observe that when $\mathcal{A}$ is linked with $\mathcal{S}$, then the topological ball $B$ is cut into two connected components $B^+$ and $B^-$.

\begin{figure}[htb]
\centering
\begin{tikzpicture}[scale=1]
\node at (0,0) {\includegraphics[width=0.5\textwidth]{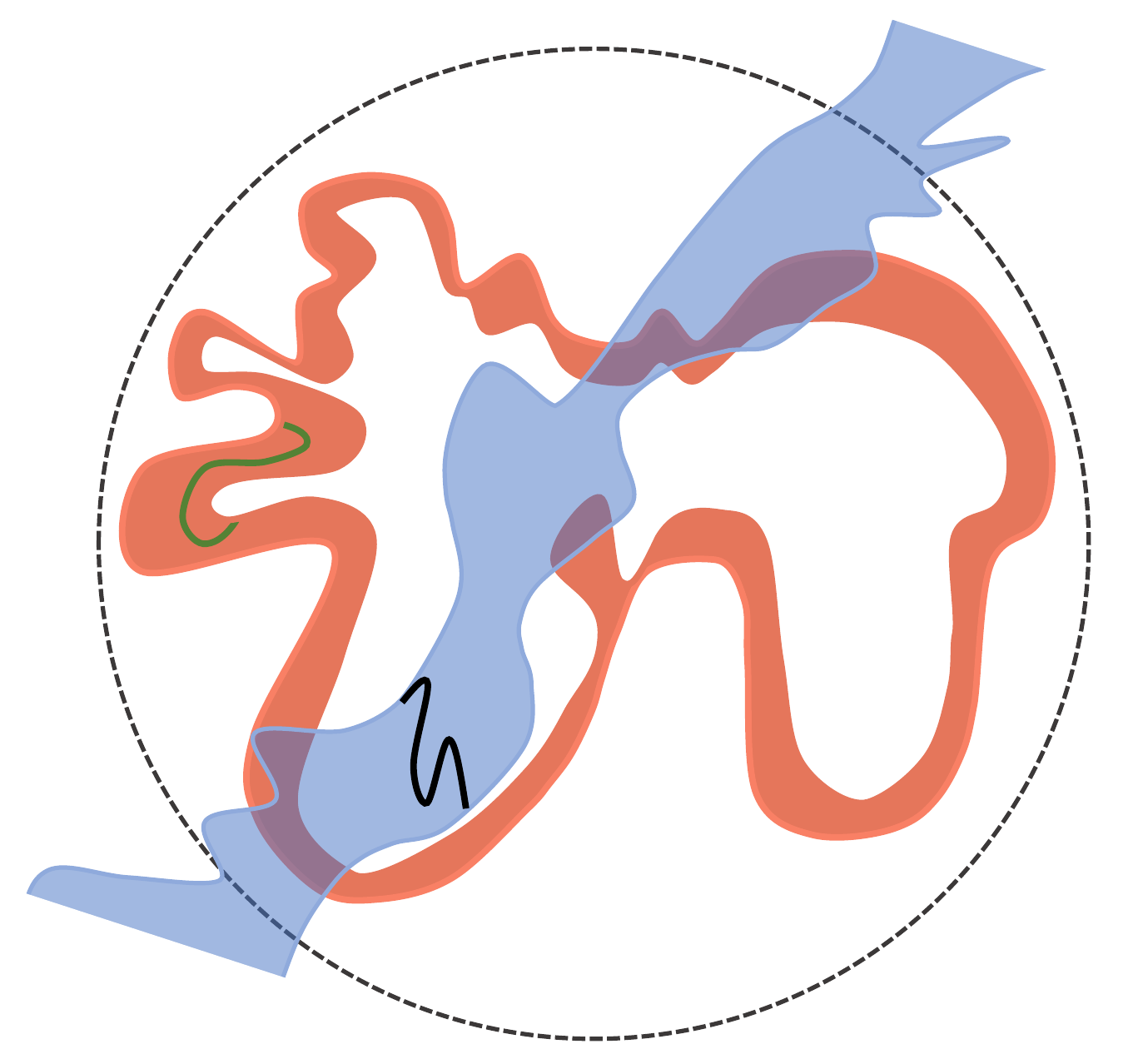}}; 
\node at (1.8,1.2) {$\mathcal{A}$};
\node at (1.1,1.7) {$\mathcal{S}$};
\node at (-0.4,-1) {$\gamma_1$};
\node at (-2.15,0) {$\gamma_2$};
\node at (-2,2) {$B$};
\end{tikzpicture}  
\caption{Linkage condition. The figure represents an example of a topological annulus~(red) linked with a topological strip~(blue) through the existence of a bridge~(black) and a ray~(green).}\label{fig-link}
\end{figure}

Notice that Definition~\ref{def-link} involves only the geometry inside a topological ball $B$.
Therefore it could include the case when the strip $\mathcal{S}$ is the intersection of an annulus $\mathcal{A}_2$ with the ball $B$.
In this manner we are generalizing the definition of the linkage between two annuli $\mathcal{A}_1,\,\mathcal{A}_2$ given in \cite[Definition~3.2]{PVZ-18}. 
In the following proposition we also recover some of the properties collected in \cite[Proposition~3.1]{PVZ-18} for the linkage of two annuli.

From the third requirement of Definition~\ref{def-link} it follows that the set $B\setminus\mathcal{S}$ has two connected components that will be denoted  $B^+$ and $B^-$.

\begin{proposition}
If the topological strip $\mathcal{S}$ is linked with the topological annulus $\mathcal{A}$, then there exists a topological ball $B$ containing $\mathcal{A}$, a bridge $\gamma_3$ in $\mathcal{S}$ and a ray $\gamma_4$ in $\mathcal{A}$ such that $\gamma_{3}\subset B\setminus in(\partial^e \mathcal{A})$, and denoting by $B^+$ the component of  $B\setminus\mathcal{S}$ that contains  $\gamma_2\subset B^+$, then $\gamma_4\subset B^-$.
\end{proposition}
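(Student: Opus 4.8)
The plan is to keep, after possibly enlarging it, the ball $B$ furnished by Definition~\ref{def-link}. Passing to a larger ball with $\mathcal{A}\subset\operatorname{int}B$ does not spoil the linkage — since $\mathcal{S}$ is unbounded, a sufficiently large ball still meets $\mathcal{S}\setminus\gamma_1$ in exactly two disjoint bridges, and the condition $\gamma_1\subset in(\partial^i\mathcal{A})$ is untouched — so I assume henceforth $\mathcal{A}\subset\operatorname{int}B$. As $\operatorname{int}B$ is a bounded simply connected open set containing the Jordan curve $\partial^e\mathcal{A}$, it also contains $\overline{in(\partial^e\mathcal{A})}$, whence $\partial B\subset out(\partial^e\mathcal{A})$. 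Recall from the discussion after Definition~\ref{def-link} that $B\setminus\mathcal{S}$ has exactly the two components $B^+,B^-$, with $B^+$ the one containing $\gamma_2$ (which lies in a single component, being disjoint from $\mathcal{S}$), and that $\mathcal{S}$ crosses $B$ as a band whose two long sides abut $B^+$ and $B^-$ respectively, each of $B^+,B^-$ containing an arc of $\partial B$.

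With this setup, $\gamma_3$ is free: take $\gamma_3$ to be either of the two disjoint bridges forming $(\mathcal{S}\setminus\gamma_1)\cap\partial B$; it is a bridge in $\mathcal{S}$ by hypothesis and satisfies $\gamma_3\subset\partial B\subset out(\partial^e\mathcal{A})\subset B\setminus in(\partial^e\mathcal{A})$.

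The construction of $\gamma_4$ carries the weight of the proof, and its crux is to show that the component $B^-$ — the one \emph{not} containing $\gamma_2$ — reaches into the hole $in(\partial^i\mathcal{A})$. I would deduce this from the bridge $\gamma_1\subset in(\partial^i\mathcal{A})$: being a bridge of $\mathcal{S}$, its two endpoints lie one on each long side of the band, both inside the open set $in(\partial^i\mathcal{A})$, and near either endpoint the complement of $\mathcal{S}$ fills a half-disk contained in the component of $B\setminus\mathcal{S}$ abutted by the relevant side; hence $in(\partial^i\mathcal{A})$ meets both $B^+$ and $B^-$, so in particular $B^-\cap in(\partial^i\mathcal{A})\neq\emptyset$. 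Since moreover $B^-$ contains an arc of $\partial B\subset out(\partial^e\mathcal{A})$, the open connected — hence path-connected — set $B^-$ meets the inside and the outside of each of the two Jordan curves $\partial^i\mathcal{A},\partial^e\mathcal{A}$; therefore it contains points $a\in\partial^i\mathcal{A}$ and $b\in\partial^e\mathcal{A}$, which I join by a path $\alpha$ inside $B^-$. As $\alpha$ may leave $\mathcal{A}$, I trim it: let $t^*$ be the last instant with $\alpha(t^*)\in\overline{in(\partial^i\mathcal{A})}$ and $s^*$ the first instant after $t^*$ with $\alpha(s^*)\in\overline{out(\partial^e\mathcal{A})}$; then $\alpha|_{[t^*,s^*]}$ lies in $B^-$, runs from $\partial^i\mathcal{A}$ to $\partial^e\mathcal{A}$, and has its interior in the open region $out(\partial^i\mathcal{A})\cap in(\partial^e\mathcal{A})$ between the two curves. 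Extracting a simple sub-arc of $\alpha|_{[t^*,s^*]}$ with the same endpoints yields the desired ray $\gamma_4$ in $\mathcal{A}$, contained in $B^-$.

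The step I expect to be the genuine obstacle is the claim that $B^-$ penetrates $in(\partial^i\mathcal{A})$: making rigorous the local half-plane picture of $\mathcal{S}$ along its edges, and the fact that the two long sides of the band border distinct components of $B\setminus\mathcal{S}$, relies on the regularity one assumes for the parametrizing homeomorphism $h_{\mathbf{S}}$ and on a Jordan-curve argument for the band $\mathcal{S}\cap B$. The enlargement of $B$, the choice of $\gamma_3$, and the trimming argument for $\gamma_4$ are otherwise routine planar topology.
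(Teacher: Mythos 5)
Your construction of $\gamma_3$ is exactly the paper's: the authors also take one of the two bridges forming $(\mathcal{S}\setminus\gamma_1)\cap\partial B$ and observe that it lies in $B\setminus in(\partial^e\mathcal{A})$. For $\gamma_4$, however, the paper writes nothing at all --- it declares the argument ``entirely analogous to \cite[Proposition~3.1]{PVZ-18}'' and omits it --- whereas you supply a self-contained proof: $B^-$ meets $in(\partial^i\mathcal{A})$ because the endpoints of the bridge $\gamma_1$ sit on the two long edges of the band, which abut the two distinct components of $B\setminus\mathcal{S}$; $B^-$ also meets the closure of $out(\partial^e\mathcal{A})$ through the arc of $\partial B$ it contains; connectedness then gives a path crossing the annular region, which you trim to a ray. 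This is the connectivity/Jordan-curve argument one expects (and presumably the one in the cited reference), and the step you flag as the crux --- that the two edges of $\mathcal{S}\cap B$ border different components of $B\setminus\mathcal{S}$ --- is indeed where the geometric content lives; it follows from the paper's standing observation that $B\setminus\mathcal{S}$ has exactly two components separated by the band. So your proof is correct and, where the paper actually argues, takes the same route; where the paper defers to a reference, you do the work.

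One small correction: your preliminary enlargement of $B$ is both unjustified and unnecessary. It is unjustified because a topological strip, being the image of the bounded set $\mathbf{S}$ under a homeomorphism, need not be unbounded, so a larger ball need not meet $\mathcal{S}\setminus\gamma_1$ in exactly two bridges. It is unnecessary because any closed topological disk $B$ containing $\partial^e\mathcal{A}$ already contains $\overline{in(\partial^e\mathcal{A})}$ (its complement is connected and unbounded, hence lies in $out(\partial^e\mathcal{A})$), so $\partial B$ is automatically disjoint from the open set $in(\partial^e\mathcal{A})$ and the inclusion $\gamma_3\subset B\setminus in(\partial^e\mathcal{A})$ holds for the original ball. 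The same remark lets you replace ``$\partial B\subset out(\partial^e\mathcal{A})$'' by ``$\partial B\cap in(\partial^e\mathcal{A})=\emptyset$'' in the construction of $\gamma_4$, which is all that argument needs.
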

\begin{proof}
First of all we observe that the existence of a bridge $\gamma_3\subset B\setminus in(\partial^e \mathcal{A})$ follows immediately from  Definition~\ref{def-link}. Indeed, we can choose $\gamma_3$ between one of the two components of $(\mathcal{S}\setminus\gamma_1)\cap\partial B$ and one of the bridges in $(\mathcal{S}\setminus\gamma_1)\cap\partial B$.
 
 The proof of the existence of the ray $\gamma_4$ is entirely analogous to that of \cite[Proposition~3.1]{PVZ-18} and is omitted.
\end{proof}

In the sequel, we deal with the study of the dynamics in a strip~$\mathcal{S}$ and in an annulus~$\mathcal{A}$. If they are linked, then there exist two disjoint topological rectangular regions $\mathcal{R}_1\subset \mathcal{A}\cap\mathcal{S}\cap{B}$ and $\mathcal{R}_2\subset \mathcal{A}\cap\mathcal{S}\cap{B}$.

Firstly, we consider the following continuous map 
\begin{equation}\label{eq-s}
\phi_{\mathcal{S}}\colon\mathcal{S}\to\mathcal{S}.
\end{equation}
Without loss of generality, we can assume that $\mathcal{R}_1,\,\mathcal{R}_2$ are homeomorphic to $R_1=[-2,-1]\times[-1,1]$ and $R_2=[1,2]\times[-1,1]$, respectively. We suppose that the map $\phi_{\mathcal{S}}$ in \eqref{eq-s} admits a lift $\widetilde{\phi}_{\mathcal{S}}$ to the covering space $\mathopen{[}a,b\mathclose{]}\times\mathopen{[}-1,1\mathclose{]}$, with $a<-2$ and $b>2$, defined as
\begin{equation}\label{eq-s-lift}
\widetilde{\phi}_{\mathcal{S}}\colon (x,y)\mapsto(x+\Xi(x,y),\zeta(x,y))
\end{equation}
where $\zeta,\,\Xi$ are continuous functions.

\begin{definition}[Strip boundary invariance condition]\label{def-boundary-strip} 
The condition holds for the map $\phi_{\mathcal{S}}$ if the second coordinate of its lift $\widetilde{\phi}_{\mathcal{S}}$ satisfies $\zeta(x,-1)\equiv -1$ and $\zeta(x,1)\equiv 1$.
\end{definition}

\begin{figure}[htb]
\centering
\begin{subfigure}{.7\textwidth}
\centering
\includegraphics[width=.7\linewidth]{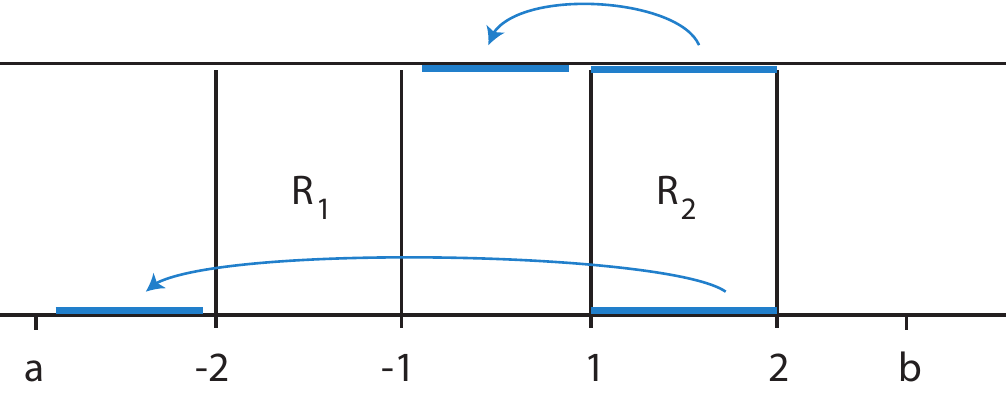}
\caption{Image of $[1,2]\times\{-1\}$ and $[1,2]\times\{1\}$ under a twist condition with respect to the rectangle $R_1$.}
  \label{fig-strip1}
\end{subfigure}\\\vspace{0.3cm}
\begin{subfigure}{.7\textwidth}
\centering
\vspace{0.8em}
\includegraphics[width=.7\linewidth]{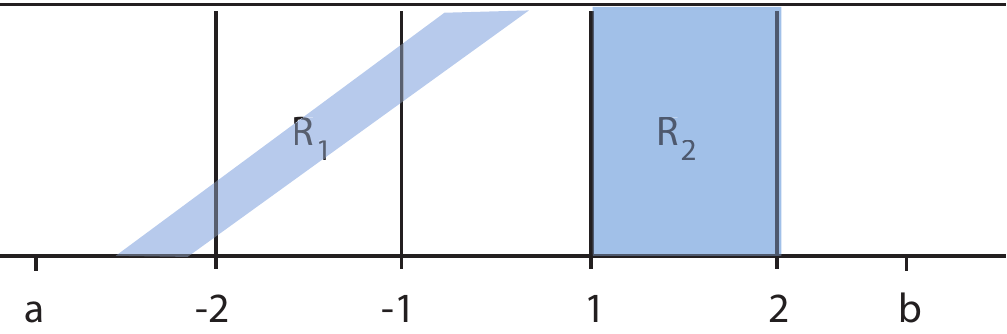}
\caption{Image of the rectangle $R_2$ under a twist condition with respect to the rectangle $R_1$.}
  \label{fig-strip2}
\end{subfigure}%
\caption{Example of strip twist condition.}\label{fig-twist-strip}
\end{figure}

\begin{definition}[Strip twist condition]\label{def-strip} 
The condition holds with respect to $R_1$ for $x\in[1,2]$ if either 
\[
\Xi(x, -1)\leq -4\quad\text{and}\quad\Xi(x,1)\geq -2,
\]
or
\[
\Xi(x, -1)\geq -2\quad\text{and}\quad\Xi(x,1)\leq -4.
\]
The condition holds with respect to $R_2$ for $x\in[-2,-1]$ if either 
\[
\Xi(x,-1)\leq 2\quad\text{and}\quad\Xi(x,1)\geq 4,
\]
or
\[
\Xi(x, -1)\geq 4\quad\text{and}\quad\Xi(x,1)\leq 2.
\]
\end{definition}

Secondly, we consider the following continuous map 
\begin{equation}\label{eq-a}
\phi_{\mathcal{A}}\colon\mathcal{A}\to\mathcal{A}.
\end{equation}
We suppose that the map $\phi_{\mathcal{A}}$ in \eqref{eq-a} admits a lift $\widetilde{\phi}_{\mathcal{A}}$ to the covering space $\mathbb{R}\times\mathopen{[}-1,1\mathclose{]}$ defined as
\begin{equation}\label{eq-a-lift}
\widetilde{\phi}_{\mathcal{A}}\colon (\theta,\rho)\mapsto(\theta+ \Theta(\theta,\rho),\omega(\theta,\rho)),
\end{equation}
where $\theta,\,\rho$ are generalized polar coordinates, and $\Theta,\,\omega$ are continuous functions $1$-periodic in the $\theta$-variable. Without loss of generality, we can assume that $\mathcal{R}_1$ and $\mathcal{R}_2$ are represented in the covering by $R_1=[2k,2k+\frac{1}{2}]\times[-1,1]$ and $R_2=[2k+1,2k+\frac{3}{2}]\times[-1,1]$, respectively.

\begin{figure}[htb]
\centering
\begin{subfigure}{.7\textwidth}
\centering
\includegraphics[width=1\linewidth]{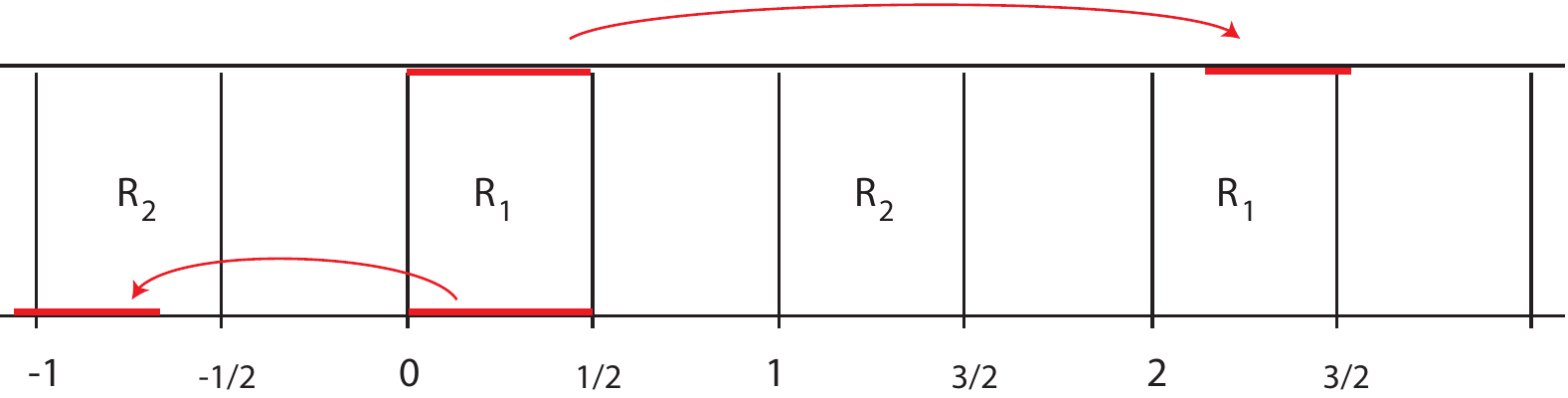}
\caption{Image of $[1,1/2]\times\{-1\}$ and $[1,1/2]\times\{1\}$ under a twist condition with respect to the rectangle $R_1$.}
  \label{fig-ann1}
\end{subfigure}\\\vspace{0.5cm}
\begin{subfigure}{.7\textwidth}
\centering
\includegraphics[width=1\linewidth]{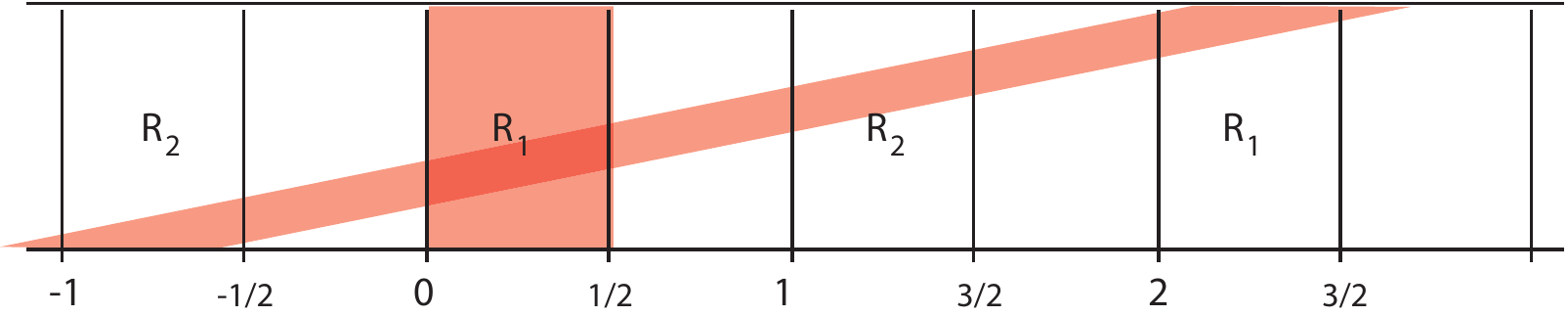}
\caption{Image of the rectangle $R_1$ under a twist condition it goes across a copy of $R_2$. Here $j_{-1}=j_{1}=0$.}
  \label{fig-ann2}
\end{subfigure}%
\caption{Example of an annular twist condition.}\label{fig-twist-ann}
\end{figure}

\begin{definition}[Annular boundary invariance condition]\label{def-boundary-annulus} 
The condition holds for the map $\phi_{\mathcal{A}}$ if the second coordinate of its lift $\widetilde{\phi}_{\mathcal{A}}$ satisfies $\omega(\theta,-1)\equiv -1$ and $\omega(\theta,1)\equiv 1$.
\end{definition}

\begin{definition}[Annular twist condition] \label{def-annulus}
There exist integers $j_{-1}$ and $j_{1}$ such that the condition holds with respect to $R_1$ for $\theta\in[0,1/2]$ if either 
\[
\Theta(\theta, -1)\leq 2j_{-1}+\tfrac{1}{2} \quad\text{and}\quad \Theta(\theta, 1)\geq 2j_{1}+\tfrac{3}{2}, \text{ with } j_{1}+1- j_{-1}>0
\]
or
\[
\Theta(\theta, -1)\geq 2j_{-1}+\tfrac{3}{2} \quad\text{and}\quad \Theta(\theta, 1)\leq 2j_{1}+\tfrac{1}{2}, \text{ with } j_{-1}+1- j_{1}>0
\]
hold.
\end{definition}
We notice that when the annular twist condition holds with respect to $R_1$ then the rectangle $R_1$ is stretched across $R_2$ a number of times which is given by $|j_{-1}-j_{1}|+1$.

\begin{theorem}\label{th-1}
Let $\mathcal{A}$ be a topological annulus linked with a topological strip $\mathcal{S}$. Let $\mathcal{R}_i$ for $i = 1,2$ be two disjoint oriented topological rectangles given through the linkage. Let $\phi_{\mathcal{A}}\colon\mathcal{A}\to\mathcal{A}$ and $\phi_{\mathcal{S}}\colon\mathcal{S}\to\mathcal{S}$, be two continuous maps that satisfy the boundary invariance conditions, and the twist
conditions. Then,
\[
\phi_{\mathcal{A}}\circ\phi_{\mathcal{S}}\colon\widehat{\mathcal{R}}_j\mathrel{\Bumpeq\!\!\!\!\!\!\longrightarrow^{m-1}}\widehat{\mathcal{R}}_j
\quad\text{ and }\quad
\phi_{\mathcal{S}}\circ\phi_{\mathcal{A}}\colon\widehat{\mathcal{R}}_{j+1}\mathrel{\Bumpeq\!\!\!\!\!\!\longrightarrow^{m-1}}\widehat{\mathcal{R}}_{j+1}
\]
for some $j\pmod{2}$ with $m=|j_{-1}-j_{1}|+1$.
\end{theorem}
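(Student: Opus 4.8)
The plan is to factor the self-stretching of the composite through the two single-map stretchings and then to exploit the multiplicative behaviour of the stretching-along-paths relation under composition, exactly as encoded in Theorem~\ref{th-app1}. First I would pass to the two coverings furnished by \eqref{eq-s-lift} and \eqref{eq-a-lift}, reading $\phi_{\mathcal S}$ through $\widetilde\phi_{\mathcal S}(x,y)=(x+\Xi,\zeta)$ and $\phi_{\mathcal A}$ through $\widetilde\phi_{\mathcal A}(\theta,\rho)=(\theta+\Theta,\omega)$. The structural point that organizes everything is that the two twists act in \emph{complementary} directions: $\phi_{\mathcal S}$ displaces points along the strip coordinate $x$ (transverse to the core of $\mathcal A$), while $\phi_{\mathcal A}$ displaces points along the angular coordinate $\theta$ (transverse to $\mathcal S$). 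Consequently each of $\mathcal R_1,\mathcal R_2$ must be equipped with two competing orientations: an \emph{angular} orientation, with $\mathcal R^-$ placed on the edges $y=\pm1$ (equivalently the $\theta=\mathrm{const}$ sides), and a \emph{radial} orientation, with $\mathcal R^-$ on the edges $x=\mathrm{const}$ (equivalently $\rho=\pm1$). This is precisely the linked-twist-map mechanism inherited from the linkage of Definition~\ref{def-link}: $\phi_{\mathcal S}$ will carry the angular orientation to the radial one and $\phi_{\mathcal A}$ the radial orientation back to the angular one, so that the two admissible composites return each rectangle to its own orientation.

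For the strip factor I would show $\phi_{\mathcal S}\colon\widehat{\mathcal R}_2\mathrel{\Bumpeq\!\!\!\!\!\!\longrightarrow}\widehat{\mathcal R}_1$ with crossing number $1$. A bridge in the angularly oriented $\mathcal R_2$ lifts to a path joining $y=-1$ to $y=1$ inside $[1,2]\times[-1,1]$; by the strip boundary invariance condition (Definition~\ref{def-boundary-strip}) its image still joins the two horizontal edges of the strip, so it remains a bridge, and by the strip twist condition (Definition~\ref{def-strip}) the first coordinate $x+\Xi(x,y)$ of the lift takes a value $\le-2$ at $y=-1$ and a value $\ge-1$ at $y=1$. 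The intermediate value theorem applied to $x+\Xi$ along the path then extracts a subpath that crosses $R_1=[-2,-1]\times[-1,1]$ exactly once from the edge $x=-2$ to the edge $x=-1$, i.e. a complete crossing of the radially oriented $\widehat{\mathcal R}_1$; this yields the single compact set $\mathcal K$ required by Definition~\ref{def-stretching}.

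The annulus factor is where the numerical content sits: I would prove $\phi_{\mathcal A}\colon\widehat{\mathcal R}_1\mathrel{\Bumpeq\!\!\!\!\!\!\longrightarrow^{m-1}}\widehat{\mathcal R}_2$. A ray in the radially oriented $\mathcal R_1$ lifts to a path from $\rho=-1$ to $\rho=1$ with $\theta\in[0,\tfrac12]$; by the annular boundary invariance condition (Definition~\ref{def-boundary-annulus}) its image again joins $\rho=-1$ to $\rho=1$, while the annular twist condition (Definition~\ref{def-annulus}) forces the angular coordinate $\theta+\Theta$ to take a value $\le 2j_{-1}+1$ at $\rho=-1$ and $\ge 2j_1+\tfrac32$ at $\rho=1$. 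By continuity the image sweeps the whole angular interval $[\,2j_{-1}+1,\;2j_1+\tfrac32\,]$, whose length is $2|j_{-1}-j_1|+\tfrac12$, matching the winding count recorded after Definition~\ref{def-annulus}. The delicate step, and the one I expect to be the main obstacle, is to convert this guaranteed sweep into pairwise disjoint compact sub-bands each producing a genuine complete crossing of the angularly oriented $\widehat{\mathcal R}_2$: because the twist condition only controls $\Theta$ at $\rho=\pm1$ and the extreme windings meet the $\mathcal R^-$ edges of the terminal copies of $R_2$ only at the endpoints of the path, the number of crossings that survive as \emph{complete} (edge-to-edge) crossings robust under perturbation of the path is $|j_{-1}-j_1|=m-1$, the residual half-period of the sweep being the partial terminal winding responsible for the gap between the naive count $m$ and the usable count $m-1$. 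Isolating these $m-1$ disjoint bands is the crux of the argument.

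Finally I would compose. Since $(\mathcal K,\phi_{\mathcal S})\colon\widehat{\mathcal R}_2\mathrel{\Bumpeq\!\!\!\!\!\!\longrightarrow}\widehat{\mathcal R}_1$ with a single band and $\phi_{\mathcal A}\colon\widehat{\mathcal R}_1\mathrel{\Bumpeq\!\!\!\!\!\!\longrightarrow^{m-1}}\widehat{\mathcal R}_2$ with $m-1$ bands, the same preimage intersection that appears in Theorem~\ref{th-app1} produces $(m-1)\cdot 1=m-1$ pairwise disjoint compact sets realizing $\phi_{\mathcal A}\circ\phi_{\mathcal S}\colon\widehat{\mathcal R}_2\mathrel{\Bumpeq\!\!\!\!\!\!\longrightarrow^{m-1}}\widehat{\mathcal R}_2$; interchanging the roles of the two maps gives $\phi_{\mathcal S}\circ\phi_{\mathcal A}\colon\widehat{\mathcal R}_1\mathrel{\Bumpeq\!\!\!\!\!\!\longrightarrow^{m-1}}\widehat{\mathcal R}_1$. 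Relabelling so that the self-stretched rectangle of $\phi_{\mathcal A}\circ\phi_{\mathcal S}$ is $\widehat{\mathcal R}_j$ and that of $\phi_{\mathcal S}\circ\phi_{\mathcal A}$ is $\widehat{\mathcal R}_{j+1}$ reduces the index to a choice of $j\pmod 2$, and which of the two alternatives in Definitions~\ref{def-strip} and~\ref{def-annulus} holds fixes that choice. Besides the winding count, the remaining care is purely bookkeeping: checking that the radial orientation output by $\phi_{\mathcal S}$ is exactly the input orientation demanded by $\phi_{\mathcal A}$ (and symmetrically), so that the two stretchings genuinely chain together.
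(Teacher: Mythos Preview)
Your approach is essentially the paper's: work in the two covering spaces, use boundary invariance plus the twist inequalities to pin the endpoints of image paths, extract $m-1$ disjoint bands from the annular sweep, and compose. The paper packages the annular count in a separate lemma (Lemma~\ref{lemma-square}), defining the bands as $K_\ell=\tilde\phi_{\mathcal A}\bigl([2\ell+1,2\ell+\tfrac32]\times[-1,1]\bigr)\cap R_{1,0}$ and arguing directly that any path across $\mathcal R_1$ is carried through all of them, rather than factoring into two single-map SAP relations and invoking Theorem~\ref{th-app1}; the content is the same, and your handling of the angular/radial orientation swap is, if anything, more explicit than the paper's.
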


We notice that \cite[Theorem~3.1]{PVZ-18} becomes a corollary of Theorem~\ref{th-1}. For the proof we use the following lemma.

\begin{lemma}\label{lemma-square}
Consider
\begin{equation}
K_{\ell}=\tilde{\phi}_{\mathcal{A}}\left([2\ell+1,2\ell+3/2]\times[-1,1] \right)\cap R_{1,0}, \quad \ell\in\mathbb{Z}
\end{equation}
where $R_{1,0}=[0,1/2]\times[-1,1]$. If $\phi_{\mathcal{A}}$ satisfies the annular twist condition then at least $m-1$ of the $K_{\ell}$ are non empty with $m=|j_1-j_{-1}|+1$.
\end{lemma}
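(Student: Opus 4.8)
The plan is to track the image under $\widetilde{\phi}_{\mathcal{A}}$ of the "vertical" sides of the copy $R_{2,\ell}=[2\ell+1,2\ell+3/2]\times[-1,1]$ of $R_2$ and show that this image must sweep across at least $m-1$ of the fundamental horizontal cells $R_{1,\ell'}=[2\ell',2\ell'+1/2]\times[-1,1]$. First I would fix the branch of the annular twist condition — say the first one, $\Theta(\theta,-1)\le 2j_{-1}+\tfrac12$ and $\Theta(\theta,1)\ge 2j_1+\tfrac32$ with $j_1+1-j_{-1}>0$ — the other branch being symmetric. The boundary invariance condition \ref{def-boundary-annulus} pins the image of the bottom edge to the line $\rho=-1$ and the image of the top edge to $\rho=1$; so the image of the whole strip $[2\ell+1,2\ell+3/2]\times[-1,1]$ stays inside $\mathbb{R}\times[-1,1]$ and I only need to control the $\theta$-coordinate. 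On the bottom edge, since $\Theta$ is continuous and $1$-periodic in $\theta$, the first coordinate of $\widetilde{\phi}_{\mathcal{A}}$ on $[2\ell+1,2\ell+3/2]\times\{-1\}$ lies in an interval of the form $[\,\theta+\Theta(\theta,-1)\,]$ that, using periodicity to reduce $\theta$ modulo $2$, is contained in (a translate by $2\ell$ of) an interval whose right endpoint is bounded above by something like $2j_{-1}+1$; similarly on the top edge the first coordinate is bounded below by roughly $2j_1+3/2$.

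Next I would argue the covering/connectedness step. The set $\widetilde{\phi}_{\mathcal{A}}(R_{2,\ell})$ is the continuous image of a connected set, hence connected, and it meets the line $\rho=-1$ (in the image of the bottom edge) and the line $\rho=1$ (in the image of the top edge). Projecting onto the $\theta$-axis, connectedness forces the projection to be an interval $I_\ell\subseteq\mathbb{R}$ that contains a point with $\theta$-value near $2\ell+2j_{-1}+\text{(something small)}$ and a point with $\theta$-value near $2\ell+2j_1+\text{(something)}$. Hence $I_\ell$ contains the closed interval between these two values, whose length is at least $2|j_1-j_{-1}|$ up to the $O(1)$ slack coming from the endpoints of $R_{2,\ell}$ and from the bounds $\tfrac12,\tfrac32$ in the twist condition. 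An interval of $\theta$-length at least $2(m-1)-O(1)$, where $m=|j_1-j_{-1}|+1$, must contain at least $m-1$ of the cells $[2\ell',2\ell'+\tfrac12]$ entirely, because consecutive such cells are spaced $2$ apart and each has width $\tfrac12$; since $\widetilde{\phi}_{\mathcal{A}}(R_{2,\ell})$ also covers the full vertical extent $[-1,1]$ over each such $\theta$-cell by the same connectedness plus boundary-invariance argument, for each of those $\ell'$ the set $\widetilde{\phi}_{\mathcal{A}}(R_{2,\ell})\cap R_{1,\ell'}$ is nonempty. Finally, translating the $\ell'$-index back (the $K_\ell$ in the statement are indexed so that $R_{1,0}=[0,1/2]\times[-1,1]$ is the reference cell and the $1$-periodicity of $\Theta,\omega$ identifies $R_{1,\ell'}$ with $R_{1,0}$ up to a shift), these $m-1$ nonempty intersections are exactly $m-1$ of the sets $K_\ell$.

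I expect the main obstacle to be bookkeeping the index arithmetic and the $O(1)$ slack precisely: one has to check that the width of the swept $\theta$-interval is genuinely $\ge 2(m-1)$ after accounting for the worst-case placement of the endpoints of $R_{2,\ell}$ and the $\tfrac12$ versus $\tfrac32$ asymmetry in Definition \ref{def-annulus}, so that exactly $m-1$ (and not $m-2$) full cells $[2\ell',2\ell'+\tfrac12]$ fit inside, and then to confirm that those cells correspond to distinct values of $\ell$ in the indexing of the $K_\ell$. A secondary point requiring care is that "$\widetilde{\phi}_{\mathcal{A}}(R_{2,\ell})$ covers each selected $\theta$-cell over the full height $[-1,1]$" really does follow — this is where I use that the image is connected, touches both $\rho=-1$ and $\rho=1$, and lies in $\mathbb{R}\times[-1,1]$, so its intersection with the vertical line $\{\theta=\text{const}\}$ for $\theta$ in the interior of the swept interval is a connected set reaching both boundary levels, hence all of $[-1,1]$; this uses the boundary invariance condition in an essential way. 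Once these two points are nailed down, the conclusion that at least $m-1$ of the $K_\ell$ are nonempty is immediate.
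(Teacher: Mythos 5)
Your plan is correct, but it travels in the opposite direction from the paper's proof. The paper fixes a vertical fibre $\{\theta_0\}\times[-1,1]$ of $R_{1,0}$ and pushes it forward: boundary invariance pins its endpoints to $\rho=\pm1$, the twist condition forces their $\theta$-coordinates apart by at least $2m-1$, and connectedness of the image curve then makes it cross the copies $[2\ell+1,2\ell+3/2]\times[-1,1]$ of $R_2$ for $\ell=j_{-1},\dots,j_{-1}+m-1$. You instead push forward a whole copy $R_{2,\ell}$ of $R_2$, show its $\theta$-projection is an interval of length at least $2(m-1)+1/2$ sweeping across $m-1$ cells $R_{1,\ell'}$, and then re-index by periodicity. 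Both arguments rest on exactly the same two ingredients (boundary invariance plus the $2m-1$ separation from the twist condition, combined with connectedness), and your bookkeeping does close: the projection contains $[2(\ell+j_{-1}+1),\,2(\ell+j_1+1)+\tfrac12]$, which in fact captures $m$ cells, so the claimed $m-1$ is safe. Your version has the merit of proving the lemma exactly as literally stated, since $K_\ell$ is defined via the forward image of the $R_2$-copies; the paper's fibrewise version is what is actually invoked in the proof of Theorem~\ref{th-1}, where one needs every path crossing $\mathcal{R}_1$ to be stretched across $\mathcal{R}_2$ along each $\mathcal{K}_\ell$, and set-level nonemptiness alone would not deliver that. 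Two small cautions: your side claim that the image covers the full vertical extent $[-1,1]$ over each selected $\theta$-cell does not follow from connectedness plus boundary invariance (the image of the bottom edge need not reach the $\theta$-values where the top edge lands, so a vertical slice need not touch both levels); fortunately that claim is not needed for nonemptiness, since any image point with $\theta$ in the cell automatically has $\rho\in[-1,1]$. Also, the periodicity you invoke to identify $R_{1,\ell'}$ with $R_{1,0}$ must be the period of the deck transformation ($\theta\mapsto\theta+2$ in the paper's cell layout), not the stated $1$-periodicity of $\Theta$ and $\omega$; with that reading your re-indexing argument is sound.
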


\begin{proof}
We will prove the lemma in the case of the first annular strip condition, the proof for the second condition being similar.

Let $\theta_0\in[0,1/2]$ be fixed. The vertical segment $(\theta_0,\rho)$, $\rho\in[-1,1]$ is mapped by $\tilde{\phi}_{\mathcal{A}}$ in to a curve. Its end points satisfy 
\[\begin{split}
&\tilde{\phi}_{\mathcal{A}}(\theta_0,-1)=(\theta_{-1},-1) \text{ where }\theta_{-1}\leq\theta_0+2j_{-1}+\tfrac{1}{2}, \\
&\tilde{\phi}_{\mathcal{A}}(\theta_0,1)=(\theta_{1},1) \text{ where }\theta_{1}\geq\theta_0+2j_{-1}+\tfrac{1}{2}+2m-1.
\end{split} 
\]
Hence, $|\theta_{-1}-\theta_1|\geq 2m-1|$ and $K_{\ell}\not=\emptyset$ for $\ell=j_{-1},\dots,j_{-1}+m-1$.
\end{proof}

\begin{proof}[Proof of Theorem~\ref{th-1}.]
First of all without loss of generality we assume that $\phi_{\mathcal{S}}$ maps $\mathcal{R}_2$ across $\mathcal{R}_1$ thanks to the strip twist condition. 
Hence we prove that $\phi_{\mathcal{S}}\circ\phi_{\mathcal{A}}\colon\widehat{\mathcal{R}}_1\mathrel{\Bumpeq\!\!\!\!\!\!\longrightarrow^{m}}\widehat{\mathcal{R}}_1$. The other situations are just an adaptation of this proof. 

We want to find disjoint compact subsets $\mathcal{K}_{1},\dots,\mathcal{K}_{m-1}\subset\mathcal{R}_{1}$ such that for any continuous path $\gamma$ across $\mathcal{R}_{1}$ with $\gamma(0)$, $\gamma(1)$ in different components of $\partial\mathcal{R}_{1}$, the restriction $\left.\phi_{\mathcal{A}}(\gamma(t))\right|_{\mathcal{K}_{\ell}}$ goes across $\mathcal{R}_{2}$. In order to do this we work on the covering space, where the $\mathcal{K}_{\ell}$ will be represented by the $K_{\ell}$ of Lemma~\ref{lemma-square}. The $\mathcal{K}_{\ell}$ are pairwise disjoint because the $K_{\ell}$ lie in a single representative $R_{1,0}$ of $\mathcal{R}_{1}$.

The arguments used in the proof of Lemma~\ref{lemma-square} ensure that the curve $\tilde{\gamma}(t)$ in the covering, satisfying $\tilde{\gamma}(0)=(\theta_0,-1)$, and $\tilde{\gamma}(1)=(\theta_1,1)$ with $\theta_0,\,\theta_1\in[0,1/2]$ goes across all the $K_{\ell}$, and that the restriction of $\tilde{\gamma}$ to each $K_{\ell}$ goes across some copy, $[2\ell+1,2\ell+3/2]\times[-1,1]$, of $R_2$.
\end{proof}

\section{Application to codimension $1$ reversible vector fields}\label{section-4} 
To detect chaotic dynamics, we apply the topological results of the previous section to some periodically forced reversible ODEs. In particular, we consider a $T$-periodic step-wise forcing term $p(t)$ that switches between two different values as follows
\begin{equation}
p(t):=
\begin{cases}
\lambda_1&\text{for }t\in[0,\tau_1),\\
\lambda_2&\text{for }t\in[\tau_1,\tau_1+\tau_2),
\end{cases}
\end{equation} 
where $\lambda_1\not=\lambda_2$ and $0<\tau_1<\tau_2<T$ with $\tau_1+\tau_2=T$. 
We investigate the $T$-periodic problem associated with the system
\begin{equation}\label{eq-4.1}
\begin{cases}
\dot{x}= y f(x,y^2), \\
\dot{y}= g(x,y^2)+p(t),
\end{cases}
\end{equation}
where $f$ and $g$ are smooth functions that identify the normal forms of codimension $1$ reversible systems introduced in~\cite{Te-97}.

Our goal is to prove the existence of chaotic dynamics for system~\eqref{eq-4.1}. First, we look at the flow of the vector field $X(x,y)$ associated with~\eqref{eq-4.1} which is given by the unique solution $(x(t),y(t))=\varphi(t,x_0,y_0)$ of $\dot{X}=X(x,y)$ satisfying $x(0)=x_0$ and $y(0)=y_0$. We study the Poincar\'e map $\Phi\colon\mathbb{R}^2\to\mathbb{R}^2$ defined by $\Phi(x_0,y_0)=\varphi(T,x_0,y_0)$ for every point $(x_0,y_0)\in\mathbb{R}^2$.  
Second, we notice that the full dynamics of the problem can be broken into two sub-systems
\begin{equation}\label{eq-l1}
\begin{cases}
\dot{x}= y f(x,y^2), \\
\dot{y}=g(x,y^2)+\lambda_1,
\end{cases}
\end{equation} 
and 
\begin{equation}\label{eq-l2}
\begin{cases}
\dot{x}= y f(x,y^2), \\
\dot{y}=g(x,y^2)+\lambda_2.
\end{cases}
\end{equation} 
Hence, we have that the Poincar\'e map $\Phi$ may be decomposed as $\Phi=\Phi_{\lambda_2}\circ\Phi_{\lambda_1}$, where, for any $(x_0,y_0)\in\mathbb{R}^2$, $\Phi_{\lambda_1}(x_0,y_0)=\varphi_{\lambda_1}(\tau_1,x_0,y_0)$ and $\Phi_{\lambda_2}(x_0,y_0)=\varphi_{\lambda_2}(\tau_2,x_0,y_0)$ are the Poincar\'e maps associated with~\eqref{eq-l1} and \eqref{eq-l2}, respectively.
We outline here  the structure of the proof for the saddle case, done by applying Theorem~\ref{th-1}.

\begin{enumerate}[label=\arabic*)]
\item Locate a flow invariant line $\Gamma_{1,*}$ for, say  $\lambda_1$ and a closed flow invariant line $\Gamma_{2,*}$ for  $\lambda_2$, making sure they intersect in at least two points.   
Then $\Gamma_{2,*}$ is going to be $\partial^e\mathcal{A}$ and  $\Gamma_{1,*}$ will be of one component of 
$\partial\mathcal{S}$.
\item Take $\tau_1$ to be the time it takes for $\varphi_{\lambda_1} $ to move one intersection point to the next one.
\item Look at a curve $\gamma_1$ ending at the  first intersection point as a candidate for a bridge and make sure $\Phi_{\lambda_1}$ maps it to $in\left(\Gamma_{2,*}\right)$. Take $P$ to be the other end point of $\gamma_1$.
\item Take the $\varphi_{\lambda_1}$ trajectory through $P$ to be the  other component of $\partial\mathcal{S}$ and take
the (closed) $\varphi_{\lambda_2}$ trajectory through $P$ to be $\partial^i\mathcal{A}$.
This ensures that the strip twist condition (Definition~\ref{def-strip}) holds.
\item Obtain the time $\tau_2$ for the annular-strip condition (Definition~\ref{def-annulus}).
\end{enumerate}
In this way we can prove that the dynamics of~\eqref{eq-4.1} is semiconjugate to a shift in a finite alphabet.

\subsection{Saddle case.} We assume that system~\eqref{eq-4.1} has a saddle structure by considering 
\begin{equation}\label{eq-saddle}
\begin{cases}
\dot{x}= x y, \\
\dot{y}= x-y^2+p(t).
\end{cases}
\end{equation}
Depending on $p(t)$, the phase-portrait of system~\eqref{eq-saddle} switches between different configurations as described in Section~\ref{section-2}.

 \begin{figure}[h]
\centering
\includegraphics[height=5cm]{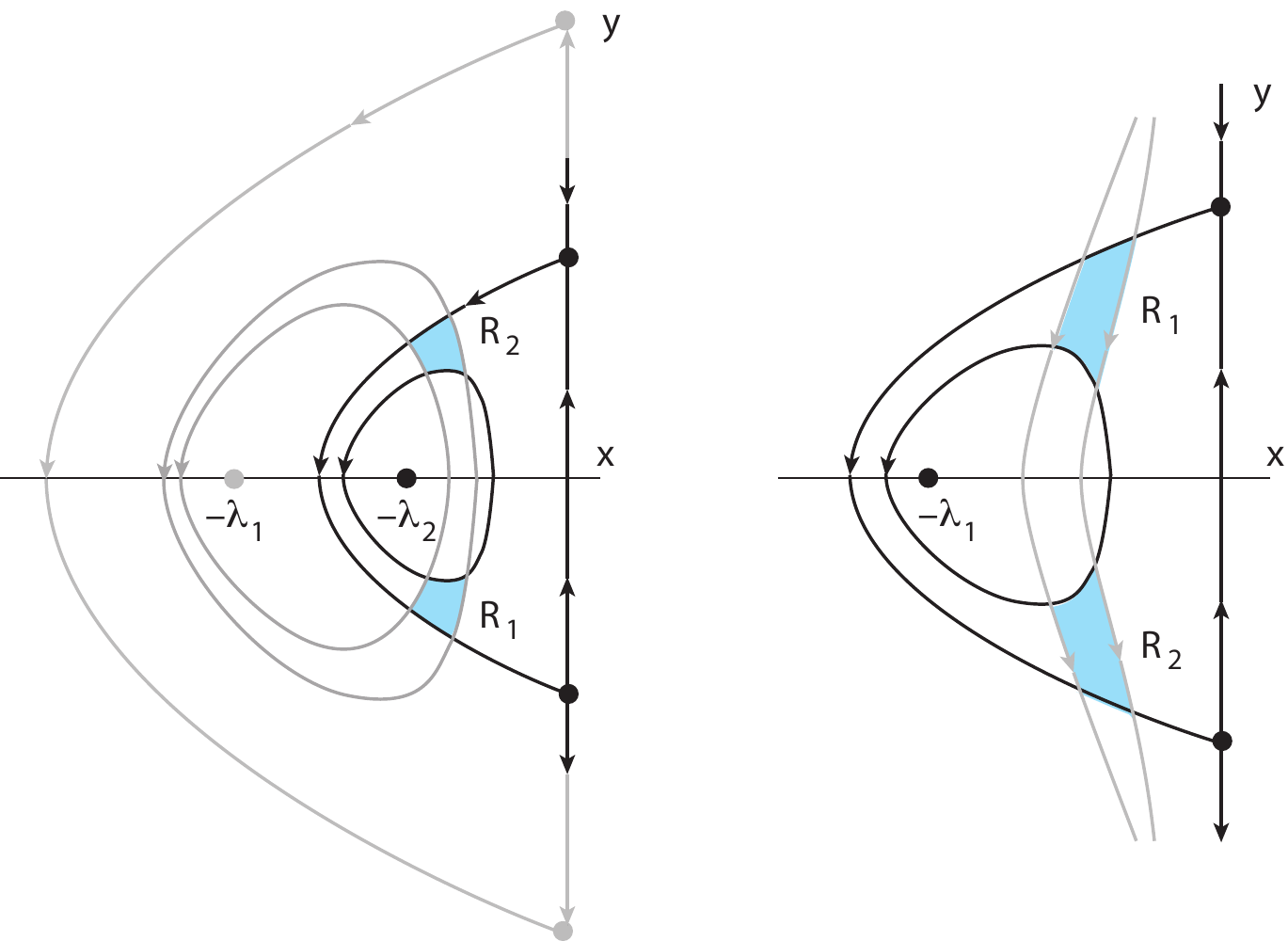} 
\caption{Construction of the annulus $\mathcal{A}$  and the strip $\mathcal{S}$ in the saddle case. Left: $0<\lambda_2<\lambda_1$; right: $\lambda_2\le 0<\lambda_1$.}\label{figSaddleRectangle}
\end{figure}

\begin{theorem}\label{th-saddle}
Let $\Phi$ be the Poincar\'e map associated with system~\eqref{eq-saddle}. Then for each $\lambda_1>0$ and each $\lambda_2$ with $\lambda_1>\lambda_2$ and for an open set of values of $\tau_1$ and $\tau_2$ the map
$\Phi$ induces chaotic dynamics on $m$ symbols, for some $m\ge 2$.
\end{theorem}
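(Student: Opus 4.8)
The strategy is to realize the Poincar\'e map $\Phi = \Phi_{\lambda_2}\circ\Phi_{\lambda_1}$ as the composition $\phi_{\mathcal{S}}\circ\phi_{\mathcal{A}}$ appearing in Theorem~\ref{th-1}, by building a linked annulus--strip pair from the invariant curves of the two sub-systems~\eqref{eq-l1} and \eqref{eq-l2}, following the five-step outline given just above the theorem statement. First I would make the phase portraits explicit for the saddle normal form $X_\lambda = (xy,\ x-y^2+\lambda)$. For $\lambda_1>0$ the flow $\varphi_{\lambda_1}$ has an invariant vertical line: along $x=0$ the first equation gives $\dot x = 0$, so $\{x=0\}$ is flow-invariant, and on it $\dot y = -y^2+\lambda_1$, whose trajectory runs between the two symmetric saddles $(0,\pm\sqrt{\lambda_1})$ in finite time; this line (suitably truncated) will furnish one component of $\partial\mathcal{S}$, and I call it $\Gamma_{1,*}$. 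For the second system I use that $\lambda_2<\lambda_1$: when $0<\lambda_2<\lambda_1$ the portrait of $X_{\lambda_2}$ has a center at $(-\lambda_2,0)$ surrounded by periodic orbits bounded by the saddle heteroclinic loop through $(0,\pm\sqrt{\lambda_2})$; when $\lambda_2\le 0$ the picture near $(-\lambda_2,0)$ is a saddle but one still finds a family of bounded invariant (periodic) curves in the relevant region, as in Figure~\ref{figSaddleRectangle}. Choosing a closed $\varphi_{\lambda_2}$-orbit $\Gamma_{2,*}$ that crosses $\{x=0\}$ in (at least) two points gives the outer boundary $\partial^e\mathcal{A}$, and the two curves are linked in the sense of Definition~\ref{def-link} because $\Gamma_{2,*}$ encircles the center while $\Gamma_{1,*}$ passes through its interior.

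Next I would fix the switching times. Set $\tau_1$ to be the time it takes $\varphi_{\lambda_1}$, restricted to the invariant line $\{x=0\}$, to carry one intersection point $Q_1 = \Gamma_{1,*}\cap\Gamma_{2,*}$ to the next one $Q_2$ (a finite, explicitly computable time since $\dot y = \lambda_1 - y^2$ is integrable there). Then pick a short arc $\gamma_1$ transverse to the flow ending at $Q_1$, serving as the candidate bridge; by continuity of $\varphi_{\lambda_1}$ and the choice of $\tau_1$, after shrinking $\gamma_1$ its image $\Phi_{\lambda_1}(\gamma_1)$ lands inside $in(\Gamma_{2,*})$. Let $P$ be the far endpoint of $\gamma_1$; the $\varphi_{\lambda_1}$-trajectory through $P$ becomes the other component of $\partial\mathcal{S}$, and the closed $\varphi_{\lambda_2}$-orbit through $P$ becomes the inner boundary $\partial^i\mathcal{A}$. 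With these choices the strip twist condition (Definition~\ref{def-strip}) for $\phi_{\mathcal{S}}=\Phi_{\lambda_1}$ holds automatically: the $\lambda_1$-flow pushes $\mathcal{R}_2$ along the strip onto $\mathcal{R}_1$ because the two boundary trajectories of $\mathcal{S}$ are $\varphi_{\lambda_1}$-orbits with $\{x=0\}$ one of them and the displacement function $\Xi$ is monotone across $\mathcal{S}$ (the $x$-component of the $\lambda_1$-flow has a definite sign on each side of $\{x=0\}$). Finally, since $\Phi_{\lambda_2}$ is a twist map on the annulus of periodic orbits of $X_{\lambda_2}$ — the rotation number varies with the radial coordinate — choosing $\tau_2$ large enough (and then in an open interval around that value) makes the total rotation across $\mathcal{A}$ exceed one full turn, so the annular twist condition (Definition~\ref{def-annulus}) holds with $m = |j_{-1}-j_1|+1 \ge 2$. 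The boundary invariance conditions are immediate: $\partial^e\mathcal{A}$ and $\partial^i\mathcal{A}$ are genuine $\varphi_{\lambda_2}$-orbits, hence $\Phi_{\lambda_2}$-invariant, and the two boundary curves of $\mathcal{S}$ are $\varphi_{\lambda_1}$-orbits, hence $\Phi_{\lambda_1}$-invariant. Theorem~\ref{th-1} then yields $\Phi = \phi_{\mathcal{S}}\circ\phi_{\mathcal{A}}\colon\widehat{\mathcal{R}}_{j}\mathrel{\Bumpeq\!\!\!\!\!\!\longrightarrow^{m-1}}\widehat{\mathcal{R}}_{j}$, and by Theorem~\ref{th-app1} (applied with this single stretching relation iterated, i.e. with $n=m-1\ge 2$ against itself) the map $\Phi$ induces chaotic dynamics on $m$ symbols in the sense of Definition~\ref{def-symb}, for an open set of $(\tau_1,\tau_2)$ by structural stability of all the transversality and crossing conditions used.

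The step I expect to be the genuine obstacle is verifying the annular twist condition quantitatively, i.e. controlling the rotation number of the period map $\Phi_{\lambda_2}$ on the annulus $\mathcal{A}$ and showing it sweeps an interval of length $>1$ as the radial parameter ranges over $[\partial^i\mathcal{A},\partial^e\mathcal{A}]$. This requires understanding the period function of the closed orbits of $X_{\lambda_2}=(xy,\ x-y^2+\lambda_2)$: one must show the period is not constant (so that the twist is nondegenerate) and, more importantly, that it blows up as the orbit approaches the outer heteroclinic/homoclinic boundary through the saddles $(0,\pm\sqrt{\lambda_2})$ — the standard logarithmic divergence of periods near a saddle connection — so that for $\tau_2$ in a suitable range the accumulated angular displacement difference $|j_{-1}-j_1|$ can be made as large as we like, in particular $\ge 1$. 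Establishing this divergence, and the monotonicity or at least the non-constancy of the period function on the relevant sub-annulus, is where the real analytic work lies; once it is in hand, the openness of the parameter set follows from continuous dependence of the flow on $(\lambda_1,\lambda_2,\tau_1,\tau_2)$ and the fact that strict inequalities define the linkage, the boundary invariance, and both twist conditions.
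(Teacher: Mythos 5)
Your overall strategy --- build a linked annulus--strip pair from invariant curves of \eqref{eq-l1} and \eqref{eq-l2}, obtain the annular twist from the divergence of periods near the saddle connection, and conclude via Theorem~\ref{th-1} --- is the paper's strategy, and your identification of the period blow-up near the heteroclinic cycle as the source of arbitrarily large twist is exactly how the paper gets the annular twist condition for large $\tau_2$. The problem is that the concrete geometric configuration you propose cannot be realized for this vector field. You take one component of $\partial\mathcal{S}$ to be the invariant line $\{x=0\}$ of the $\lambda_1$-flow and then ask for a closed $\varphi_{\lambda_2}$-orbit $\Gamma_{2,*}$ crossing it in two points. No such orbit exists: since $\dot x = xy$, the line $\{x=0\}$ is invariant for \emph{both} flows, so every periodic orbit of \eqref{eq-l2} (all of which encircle the center $(-\lambda_2,0)$) lies entirely in $\{x<0\}$ and never meets $\{x=0\}$; the only closed invariant $\lambda_2$-curve touching $\{x=0\}$ is the heteroclinic cycle, which \emph{contains} the segment of $\{x=0\}$ between the saddles, so its intersection with your $\Gamma_{1,*}$ is an arc rather than two transversal points and the linkage of Definition~\ref{def-link} degenerates. (Relatedly, the orbit on $\{x=0\}$ joining $(0,\pm\sqrt{\lambda_1})$ is itself a heteroclinic connection and takes infinite, not finite, time.) The paper avoids this by taking \emph{both} components of $\partial\mathcal{S}$ to be arcs of closed $\lambda_1$-orbits $\Gamma_{1}(\alpha,0)$, $\Gamma_{1}(\beta,0)$ with $-\lambda_2<\beta<\alpha<0$: these encircle $(-\lambda_1,0)$, which lies \emph{outside} the region bounded by the $\lambda_2$-heteroclinic cycle, so each of them genuinely enters and leaves the annulus and crosses $\partial^{e}\mathcal{A}=\Gamma_{2}(x^*,0)$ in two points.

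A second gap: for $\lambda_2\le 0$ you assert that ``one still finds a family of bounded invariant (periodic) curves'' for the $\lambda_2$-flow. This is false --- that system has a single saddle at $(-\lambda_2,0)$ and no periodic orbits at all. The paper handles $\lambda_1>0\ge\lambda_2$ by exchanging the roles of the two flows: the annulus (outer boundary the heteroclinic cycle through $(0,\pm\sqrt{\lambda_1})$, inner boundary a closed orbit around the center) is built from the $\lambda_1$-flow, the strip from $\varphi_{\lambda_2}$-trajectories, and it is then $\tau_2$ that plays the role of the transit time across the strip. Two smaller points: your composition order is inconsistent ($\Phi=\Phi_{\lambda_2}\circ\Phi_{\lambda_1}$ corresponds to $\phi_{\mathcal{A}}\circ\phi_{\mathcal{S}}$ once you set $\phi_{\mathcal{S}}=\Phi_{\lambda_1}$ and $\phi_{\mathcal{A}}=\Phi_{\lambda_2}$, not to $\phi_{\mathcal{S}}\circ\phi_{\mathcal{A}}$ --- harmless, since Theorem~\ref{th-1} covers both orders); and the paper needs a genuine intermediate-value argument (the function $\overline{\psi}$ in its proof) to place the inner boundary $\partial^{i}\mathcal{A}$ so that the two rectangles produced by the linkage are compatible with the strip twist in time $\tau_1$, a step that your ``shrink the bridge $\gamma_1$'' remark glosses over.
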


\begin{proof}
First of all we notice that the following two cases can occur: $\lambda_1>\lambda_2>0$ or $\lambda_1>0\geq\lambda_2$. 

Let us suppose that $\lambda_1$ and $\lambda_2$ are two fixed positive values satisfying the first case. Then for both systems \eqref{eq-l1} and \eqref{eq-l2} there exist three equilibria. In particular, there exists a heteroclinic cycle around the center $(-\lambda_i,0)$ which joins the two saddles $(0,-\sqrt{\lambda_i})$ and $(0,\sqrt{\lambda_i})$, for $i=1,\,2$.

Let $(x^*,0)$ be the point where the heteroclinic cycle of system \eqref{eq-l2} crosses the negative part of the $x$-axis. Then two configurations are possible: $-\lambda_1<x^*<-\lambda_2$ or $x^*<-\lambda_1$.  It will be not restrictive to consider the first configuration since the other situation can be treated similarly. We proceed with the construction of an annulus $\mathcal{A}$ and a strip $\mathcal{S}$ which satisfy the topological conditions required to apply Theorem~\ref{th-1}. 

For any $(x,y)\in\mathbb{R}^2$, we call $\Gamma_{1}{(x,y)}$ and $\Gamma_{2}{(x,y)}$ the trajectories through the point $(x,y)$ of system \eqref{eq-l1} and \eqref{eq-l2}, respectively. Let $\Gamma_{2}{(x^*,0)}$ be the heteroclinic trajectory through $(x^*,0)$, then we define the outer component of $\partial\mathcal{A}$ as $\partial^{e}\mathcal{A}:=\Gamma_{2}{(x^*,0)}\cup \{(0,-\sqrt{\lambda_2})\}\cup\Gamma_{2}{(0,0)}\cup\{(0,\sqrt{\lambda_2})\}$. Let $\alpha<0$ with $-\lambda_2<\alpha$ be any number so the trajectory $\Gamma_{1}{(\alpha,0)}$ through $(\alpha,0)$ will cross the heteroclinic connection $\Gamma_{2}{(x^*,0)}$. We take $\Gamma_{1}{(\alpha,0)}\cap\{x^*\leq x\leq 0\}$ to be one of the components of $\partial\mathcal{S}$, and we construct the other two boundary pieces of the annulus and the strip so as to satisfy the linkage condition and the twist conditions. 

Let $\tau_1$ be the minimum positive time such that, if $r(t)$ is a solution of \eqref{eq-l1} through $(\alpha,0)$ with $r(0)\in\Gamma_{2}{(x^*,0)}\cap\{y<0\}$, then $r(\tau_1)\in\Gamma_{2}{(x^*,0)}\cap\{y>0\}$. For any point $(x,y)\in\Gamma_{2}{(x^*,0)}\cap\{y<0\}$ close to $r(0)$ the points $\varphi_{\lambda_1}(\tau_1,x,y)$ form a curve through $r(\tau_1)$. Generically this curve goes across $\Gamma_{2}{(x^*,0)}$ (otherwise, make a small change in $\alpha$). Suppose that the curve is below $\Gamma_{2}{(x^*,0)}$ to the left of $r(\tau_1)$ (otherwise the arguments are similar). 
Take $\beta<0$ with $-\lambda_2<\beta<\alpha<0$ such that the points in the trajectory $\Gamma_{1}{(\beta,0)}$ of system \eqref{eq-l1} through $(\beta,0)$ satisfy the condition on the curve. Then we take the other component of $\partial\mathcal{S}$ as $\Gamma_{1}{(\beta,0)}\cap\{x^*\leq x\leq 0\}$. It remains to obtain the inner component of $\partial\mathcal{A}$.

Let $\Pi\colon\mathbb{R}^2\to\mathbb{R}^2$ be the projection on the second component, namely $\Pi(x,y)=y$. For any $(x,y)\in\mathbb{R}^2$ let $\psi(x,y)=\Pi(\varphi_{\lambda_1}(\tau_1,x,y))$ and let $\overline{\psi}(x,y)=\psi(x,y)+\Pi(x,y)$, so $\overline{\psi}(x,y)$ compares the height of $\varphi_{\lambda_1}(\tau_1,x,y)$ to that of the symmetric point of $(x,y)$.
 
 Let $q(t)$ be the solution of \eqref{eq-l1} through $(\beta,0)$ with $q(0)\in\Gamma_{2}{(x^*,0)}\cap\{y<0\}$. Then $\overline{\psi}(q(0))<0$. Also there exists a $\sigma>0$ such that $q(\sigma)\in\Gamma_{2}{(x^*,0)}\cap\{y>0\}$. By construction, $\overline{\psi}(q(\sigma))>0$. Therefore, there exists $\widehat{\sigma}\in(0,\sigma)$ such that $\overline{\psi}(q(\widehat{\sigma}))=0$. This means that $\varphi_{\lambda_1}(\tau_1,q(\tau_1))$ is symmetric to $q(\tau_1)$. The trajectory $\Gamma_{2}(q(\widehat{\sigma}))$ will go through both $q(\widehat{\sigma})$ and $\varphi_{\lambda_1}(\tau_1,q(\widehat{\sigma}))$. We define the inner component of $\partial\mathcal{A}$ as $\partial^{i}\mathcal{A}:=\Gamma_{2}(q(\widehat{\sigma}))$.
 
In this manner, the topological annulus $\mathcal{A}$ and the topological strip $\mathcal{S}$ are linked by construction (see Figure~\ref{figSaddleRectangle}). The linkage condition gives two symmetric topological rectangles $\mathcal{R}_1$ and $\mathcal{R}_2$ (in the lower and upper half-plane, respectively) that satisfy the twist conditions. Indeed, a strip-twist condition holds for $\Phi_{\lambda_1}\colon\mathcal{S}\to\mathcal{S}$ because the rectangle $\mathcal{R}_1\subset\mathcal{A}\cap\mathcal{S}\cap\{y<0\}$ is stretched across $\mathcal{R}_2\subset\mathcal{A}\cap\mathcal{S}\cap\{y>0\}$.
Since $\Gamma_{2}(x^*,0)$ is a heteroclinic connection then for every $m\geq2$ there exists $\tau_2$ large enough such that  an annulus-twist condition also holds for $\Phi_{\lambda_2}\colon\mathcal{A}\to\mathcal{A}$ because $\mathcal{R}_2$ is stretched across $\mathcal{R}_1$ $m$-times (depending on $\tau_2$).
 The result follows by an application of Theorem~\ref{th-1} to the Poincar\'e map $\Phi=\Phi_{\lambda_2}\circ\Phi_{\lambda_1}$. This concludes the first case.

The proof above holds for a fixed value of $\tau_1$ and for sufficiently large $\tau_2$.
However, we may obtain the result for $\tau_1$ in an open interval by taking different values of $\alpha$. 

\medbreak

The arguments above yield a proof for the case $\lambda_1>0\geq\lambda_2$, we just indicate where it needs to be adapted.
The outer component of $\partial\mathcal{A}$ may be taken as $\partial^{e}\mathcal{A}:=\Gamma_{1}{(x^*,0)}\cup \{(0,-\sqrt{\lambda_1})\}\cup\Gamma_{1}{(0,0)}\cup\{(0,\sqrt{\lambda_1})\}$, where $\Gamma_{1}{(x^*,0)}$ is the heteroclinic trajectory of $\varphi_{\lambda_1}$ going through $(x^*,0)$. One of the components of $\partial\mathcal{S}$ will be $\Gamma_2(\alpha,0)$ with $-\lambda_1<\alpha<0$.

Then take $\tau_2$ to be the least positive time to go from  $\Gamma_2(\alpha,0)\cap\Gamma_{1}{(x^*,0)}\cap\{y>0\}$ to $\Gamma_{1}{(x^*,0)}\cap\{y<0\}$.
Apply the arguments above to obtain the other component of $\partial\mathcal{S}$ as a $\varphi_{\lambda_2}$ trajectory that starting at $\Gamma_{1}{(x^*,0)}\cap\{y>0\}$ arrives above $\Gamma_{1}{(x^*,0)}\cap\{y<0\}$ in time $\tau_2$.
Then find  a point $q$  in this trajectory  and in the upper half-plane, such that $\Phi_{\lambda_2}$ maps $q$ to its symmetric $h(q)$.
Take $\partial^{i}\mathcal{A}:=\Gamma_2(q)$ to complete the construction.
\end{proof}

In the case when both  $\lambda_1$ and $\lambda_2$ are negative there are no annular invariant regions, so the results cannot be applied. 
Moreover, in this case there are no non-trivial  periodic orbits, so we do not expect  periodic forcing to yield chaos.
The same holds for the cusp case below, when both  $\lambda_1$ and $\lambda_2$ are positive.

\subsection{Cusp case.}  When system~\eqref{eq-4.1} has the following form 
\begin{equation}\label{eq-cusp}
\begin{cases}
\dot{x}= y, \\
\dot{y}= x^2+p(t).
\end{cases}
\end{equation}
then its phase-portrait is of cusp type. We notice that system~\eqref{eq-cusp} has also a Hamiltonian structure, and at this juncture, when  $\lambda_1<0$ and $\lambda_2\leq0$ the geometry is similar to the one investigated in \cite{So-sub,SoZa-17}. Hence, we expect that chaotic dynamics occurs for $\tau_1$ and $\tau_2$ large enough. 
For Theorem~\ref{th-saddle} we have used a heteroclinic connection to obtain an annulus twist condition. Here the existing homoclinic connection may be used for the same purpose and,
by applying the procedure exploited for Theorem~\ref{th-saddle}, we can prove what follows.

\begin{theorem}\label{th-cusp}
Let $\Phi$ be the Poincar\'e map associated with system~\eqref{eq-cusp}. Then for each $\lambda_1\leq0$ and each $\lambda_2$ with $\lambda_1<\lambda_2$ and for an open set of values of $\tau_1$ and $\tau_2$ the map $\Phi$ induces chaotic dynamics on $m\ge 2$ symbols.
\end{theorem}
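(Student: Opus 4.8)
The plan is to mirror the construction in the proof of Theorem~\ref{th-saddle} essentially line by line, the only structural change being that the annular twist --- produced there by a heteroclinic cycle --- is here produced by the homoclinic loop of the cusp normal form. Since $\lambda_1<0$, the frozen system~\eqref{eq-l1} is Hamiltonian, with a center at $(-\sqrt{-\lambda_1},0)$, a hyperbolic saddle at $(\sqrt{-\lambda_1},0)$, a homoclinic loop $L_1$ through $(-2\sqrt{-\lambda_1},0)$ enclosing a punctured disc of periodic orbits, and the reversing involution $h(x,y)=(x,-y)$; and the same is true of~\eqref{eq-l2} when $\lambda_2<0$. I would split the argument according to the sign of $\lambda_2$. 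If $\lambda_1<\lambda_2<0$, then $\sqrt{-\lambda_2}<\sqrt{-\lambda_1}$ and one checks, by comparing the two cubic Hamiltonians, that the loop $L_2$ of~\eqref{eq-l2} satisfies $L_2\subset in(L_1)$; in this case I would put $\partial^{e}\mathcal{A}:=L_2$ and build the strip $\mathcal{S}$ out of arcs of periodic orbits of~\eqref{eq-l1} (as in the first sub-case of Theorem~\ref{th-saddle}), so that $\phi_{\mathcal{A}}=\Phi_{\lambda_2}$ and $\phi_{\mathcal{S}}=\Phi_{\lambda_1}$. If instead $0\le\lambda_2$, only~\eqref{eq-l1} has a loop, so I would put $\partial^{e}\mathcal{A}:=L_1$ and build $\mathcal{S}$ out of arcs of the unbounded orbits of~\eqref{eq-l2} (as in the second sub-case of Theorem~\ref{th-saddle}), with the roles interchanged: $\phi_{\mathcal{A}}=\Phi_{\lambda_1}$, $\phi_{\mathcal{S}}=\Phi_{\lambda_2}$. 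The value $\lambda_1=0$ is degenerate --- it forces $\lambda_2>0$ and leaves no invariant annulus --- and I would either read it as excluded or treat it by a limiting argument, flagging the point.

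Take $\lambda_1<\lambda_2<0$ for concreteness, and follow the five-step scheme outlined at the start of \S\ref{section-4}. First, choose $\alpha$ with $-\sqrt{-\lambda_2}<\alpha<0$ so that the periodic orbit $\Gamma_{1}(\alpha,0)$ of~\eqref{eq-l1} crosses $L_2$ transversally in exactly two points, one in $\{y<0\}$ and one in $\{y>0\}$, and take the arc of $\Gamma_{1}(\alpha,0)$ between them as one component of $\partial\mathcal{S}$. Let $\tau_1$ be the least positive time for which the $\varphi_{\lambda_1}$-solution through the lower crossing point reaches the upper one; generically the image under $\varphi_{\lambda_1}(\tau_1,\cdot)$ of a short arc of $L_2\cap\{y<0\}$ crosses $L_2$ near the upper point (perturb $\alpha$ slightly if it does not). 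Choosing $\beta$ with $-\sqrt{-\lambda_2}<\beta<\alpha$ so that $\Gamma_{1}(\beta,0)$ lies on the appropriate side supplies the second component of $\partial\mathcal{S}$, and restricting both arcs to a window $\{x_{-}\le x\le x_{+}\}$ around $L_2$ turns $\mathcal{S}$ into a genuine topological strip for which the strip twist condition (Definition~\ref{def-strip}) holds. For $\partial^{i}\mathcal{A}$ I would re-run the intermediate-value argument of Theorem~\ref{th-saddle}: along the $\varphi_{\lambda_1}$-orbit $q(\cdot)$ through $(\beta,0)$ the quantity comparing the heights of $\varphi_{\lambda_1}(\tau_1,q(t))$ and $h(q(t))$ changes sign, yielding a point $\widehat q$ whose $\Phi_{\lambda_1}$-image is its mirror point; taking $\partial^{i}\mathcal{A}:=\Gamma_{2}(\widehat q)$, a closed orbit of~\eqref{eq-l2} inside $L_2$, produces an annulus $\mathcal{A}$ linked with $\mathcal{S}$ in the sense of Definition~\ref{def-link}, together with $h$-symmetric rectangles $\mathcal{R}_1\subset\{y<0\}$ and $\mathcal{R}_2\subset\{y>0\}$ on which the boundary invariance conditions (Definitions~\ref{def-boundary-strip} and~\ref{def-boundary-annulus}) hold automatically, because the relevant boundary pieces are frozen-flow trajectories.

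The one genuinely new ingredient is the annular twist (Definition~\ref{def-annulus}) for $\phi_{\mathcal{A}}=\Phi_{\lambda_2}$: because $L_2$ is a homoclinic orbit to a hyperbolic saddle of a Hamiltonian system, the period of the periodic orbits of~\eqref{eq-l2} tends to $+\infty$ (logarithmically in the energy defect) as one approaches $\partial^{e}\mathcal{A}=L_2$, while points of $L_2$ itself flow toward the saddle and accumulate no net angular displacement; meanwhile $\partial^{i}\mathcal{A}$ is a fixed periodic orbit of finite period $P$, whose angular advance over time $\tau_2$ grows like $\tau_2/P$. Hence, exactly as in Theorem~\ref{th-saddle}, the crossing number of $\phi_{\mathcal{A}}$ from $\widehat{\mathcal{R}}_2$ to $\widehat{\mathcal{R}}_1$ can be made arbitrarily large --- in particular $\ge 2$ --- by taking $\tau_2$ large, and it stays $\ge 2$ on an open interval of $\tau_2$ since the twist inequalities are strict. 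Theorem~\ref{th-1} applied to $\Phi=\Phi_{\lambda_2}\circ\Phi_{\lambda_1}$ then gives chaotic dynamics on $m\ge 2$ symbols, and letting $\alpha$ vary over a small interval produces an open interval of admissible $\tau_1$; the case $0\le\lambda_2$ is handled identically after swapping the indices $1$ and $2$, the loop being $L_1$ and the long time being $\tau_1$. I expect the main obstacle to be exactly the one present in Theorem~\ref{th-saddle}, namely verifying that the chosen trajectory of the strip system really crosses the homoclinic loop of the annulus system in two points and that the two rectangles it induces genuinely satisfy the strip twist condition --- this should be a generic configuration, repairable by perturbing $\alpha$, but turning it, together with the degenerate endpoint $\lambda_1=0$, into a fully rigorous statement is where the actual work lies.
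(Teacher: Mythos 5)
Your proposal follows exactly the route the paper indicates for Theorem~\ref{th-cusp}: the paper gives no detailed argument here, saying only that the homoclinic loop replaces the heteroclinic cycle of Theorem~\ref{th-saddle} and that the case $\lambda_2>0$ requires a strip condition because system~\eqref{eq-l2} then has no invariant annulus --- precisely your two sub-cases, with $\phi_{\mathcal{A}}$ and $\phi_{\mathcal{S}}$ assigned as you assign them. Your sketch is in fact more detailed than the paper's own justification, and your caveat that the endpoint $\lambda_1=0$ leaves no homoclinic loop (hence no invariant annulus) is a legitimate observation about the statement as written.
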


The case $\lambda_1<\lambda_2<0$ of Theorem~\ref{th-cusp} may also be obtained as a corollary to \cite[Theorem~4.1]{MRZ-10}. Our methods provide an alternative proof and extend the result to the case $\lambda_1<0$, $\lambda_2>0$.
In the latter case there is no invariant annulus for $\lambda_2>0$ and for the proof we need to use a strip condition.


\bigskip\Addresses
\end{document}